\documentclass[12pt, reqno, twoside, letterpaper]{amsart}

\usepackage{paperstyle}

\usepackage{graphicx}
\usepackage{float}
\usepackage{mathtools}
\usepackage{bbm}
\usepackage{booktabs}

\newcommand{\Dfun}{\boldsymbol D} 
\newcommand{\Efun}{\boldsymbol E} 
\newcommand{\Om}{\boldsymbol\Omega} 
\newcommand{\om}{\boldsymbol\omega} 
\newcommand{\omflat}{\om_{\flat}} 
\newcommand{\omsharp}{\om_{\sharp}} 
\newcommand{\rzero}{\boldsymbol\rho_0} 
\newcommand{\azero}{{\boldsymbol a}_0} 

\newcommand{\szero}{\boldsymbol\sigma_0} 
\newcommand{\tszero}{\boldsymbol{\widetilde\sigma}_{\boldsymbol0}} 
\newcommand{\Thstar}{\boldsymbol\Theta} 
\newcommand{\Zfun}{\boldsymbol Z} 
\newcommand{\zalt}{\zeta_{\mathrm{alt}}} 

\DeclareMathOperator{\Li}{Li}

\title{Zeros of the Dirichlet series of even zeta values}
\author{William D. Banks}
\address{Department of Mathematics\\
University of Missouri\\
Columbia MO 65211\\
USA}
\email{bankswd@missouri.edu}

\keywords{Riemann zeta function, Dirichlet series, functional equation, zeros of Dirichlet series, $a$-points}
\subjclass[2020]{11M06, 11M26, 11M41}

\date{\today}

\begin{document}

\begin{abstract}
We give a complete unconditional description of the zero set of the
Dirichlet series
$\Dfun(s)\defeq\sum_{n\ge1}\zeta(2n)\,n^{-s}$,
which continues meromorphically to $\C$ with a single simple pole at
$s=1$. The series possesses neither an Euler product nor a self-dual
functional equation, and descriptions with this level of completeness are
exceedingly rare for such series. The key input is an exact
functional equation of Hecke type, obtained from the Lipschitz
summation formula, which expresses $\Dfun$ in the left half-plane as
a gamma factor times a dual series over the complex logarithms of
the perfect squares; Riemann's functional equation appears as a
single column of the dual series. The zeros fall into four families.
The half-plane $\sigma\ge\szero=1.5001\cdots$ is zero-free, and
$\Dfun$ has a unique real zero $\rzero=0.2004\cdots$. The zeros in
the critical strip are perturbed $a$-points
of $\zeta$ for values of $a$ near $-(\zeta(2)-1)$, and their
counting function obeys a Riemann--von Mangoldt law. The remaining
zeros form two complex-conjugate strings that recede into the left
half-plane along explicit rays, are eventually simple, and satisfy
an asymptotic with geometrically decaying error. The string geometry
is governed by interference between the two smallest frequencies of
the dual series, $2\log2$ contributed by the entire part of $\Dfun$
and $\pm2\pi i$ contributed by $\zeta$. No hypothesis of Riemann
type is assumed at any point.
\end{abstract}

\maketitle

\tableofcontents

\newpage{\Large\section{Introduction}}

A Dirichlet series that possesses neither an Euler product nor a
self-dual functional equation escapes much of the classical theory.
Davenport and Heilbronn~\cite{DH} showed that such a series can
vanish even in its half-plane of absolute convergence, yet a complete
unconditional description of an entire zero set is seldom possible.
This paper gives one such description. Following Riemann,
$s=\sigma+it$ denotes a complex variable, with
$\sigma \defeq \Re\,s$ and $t \defeq \Im\,s$. We investigate the
Dirichlet series
\be\label{eq:def}
\Dfun(s)\defeq\sum_{n\in\N}\frac{\zeta(2n)}{n^{s}}\qquad(\sigma>1),
\ee
where $\zeta$ is the Riemann zeta function. As $\zeta(2n)\to1$
and the coefficients are positive, the series in \eqref{eq:def} has
abscissa of convergence equal to one. On the other hand, since
$\zeta(2n)-1\ll 4^{-n}$, the related series
\[
\Efun(s)\defeq\sum_{n\in\N}\frac{\zeta(2n)-1}{n^{s}}
\]
converges absolutely for all $s\in\C$, hence \emph{$\Efun$ is entire}.
By the decomposition
\be\label{eq:split}
\Dfun(s)=\zeta(s)+\Efun(s),
\ee
we infer that $\Dfun$ continues meromorphically to $\C$,
the only singularity being a simple pole at $s=1$ of residue one.

Writing $\zeta(2n)-1=\sum_{k\ge 2}k^{-2n}$ and interchanging sums,
we have
\be\label{eq:polylog}
\Dfun(s)=\sum_{k\in\N}\Li_{s}(k^{-2})
\mand
\Efun(s)=\sum_{k\ge 2}\Li_{s}(k^{-2}),
\ee
where
\[
\Li_{s}(u)\defeq\sum_{m\in\N}u^{m}m^{-s}
\]
is the polylogarithm of order $s$.
For $\Efun$, the underlying double sum is absolutely convergent at
every $s\in\C$, so the interchange is justified and
\eqref{eq:polylog} holds for $\Efun$ throughout $\C$.
Since $\Li_{s}(1)=\zeta(s)$ and $\Efun$ is entire, the identity for
$\Dfun$ in \eqref{eq:polylog} follows from \eqref{eq:split} when $\sigma>1$,
and it extends to all of $\C$ by analytic continuation.

Special values of $\Dfun$ and of $\Efun$ at integer arguments belong
to the classical family of rational zeta series (see \cite{SC,BBC}).
For example, we have
\[
\Efun(0)=\sum_{n\in\N}(\zeta(2n)-1)=\tfrac34
\mand
\Efun(1)=\sum_{n\in\N}\frac{\zeta(2n)-1}{n}=\log 2.
\]
Combined with \eqref{eq:split}, the first evaluation implies
$\Dfun(0)=\zeta(0)+\tfrac34=\tfrac14$, and the second (as $\Efun$ is
entire) yields the Laurent expansion of $\Dfun$ near $s=1$, namely,
\[
\Dfun(s)=(s-1)^{-1}+\gamma+\log 2+O(s-1).
\]
Identities of this kind capture the behavior of $\Dfun$ at
individual points, but they say nothing about where $\Dfun$
vanishes.

Our principal aim is to determine the global structure of the
zero set of~$\Dfun$. The key input is an exact functional equation
of Hecke type (Theorem~\ref{thm:fe}), obtained by applying the
Lipschitz summation formula to each summand in \eqref{eq:polylog}.
The dual object is a double series over the \emph{frequency set}
\be\label{eq:freq}
\Om\defeq
\bigl\{\om_{k,\ell}=\log k^2+2\pi i \ell:k\in\N,~\ell\in\Z\bigr\}
\setminus\{0\},
\ee
that is, over all complex logarithms of the perfect squares. The
$k=1$ column of the dual series recovers Riemann's functional equation.

Interference between the smallest real frequency
\be\label{eq:omflat}
\omflat\defeq\om_{2,0}=2\log2\qquad\text{(contributed by $\Efun$)}
\ee
and the smallest complex frequencies
\be\label{eq:omsharp}
\pm\omsharp\defeq\om_{1,\pm1}=\pm2\pi i\qquad\text{(contributed by $\zeta$)}
\ee
determines every zero of sufficiently large modulus in the left half-plane.
The resulting picture (Figure~\ref{fig:zeros})
displays several notable features: a zero-free right half-plane
(Proposition~\ref{prop:right}), zeros in
the critical strip that shadow the $a$-points of~$\zeta$ for
$a\approx-(\zeta(2)-1)$, with a count obeying a Riemann--von Mangoldt
law (Theorem~\ref{thm:rvm}), an intermediate zero-free zone
(Proposition~\ref{prop:middle}), and two rectilinear strings of
left-plane zeros with explicit asymptotics
(Theorem~\ref{thm:strings}). On the real axis, $\Dfun$ vanishes at
\[
\rzero=0.200411339\cdots
\]
(Proposition~\ref{prop:real}), and this is its only real zero
(Proposition~\ref{prop:mono}).

\begin{figure}[ht]
\centering
\includegraphics[width=4.5truein]{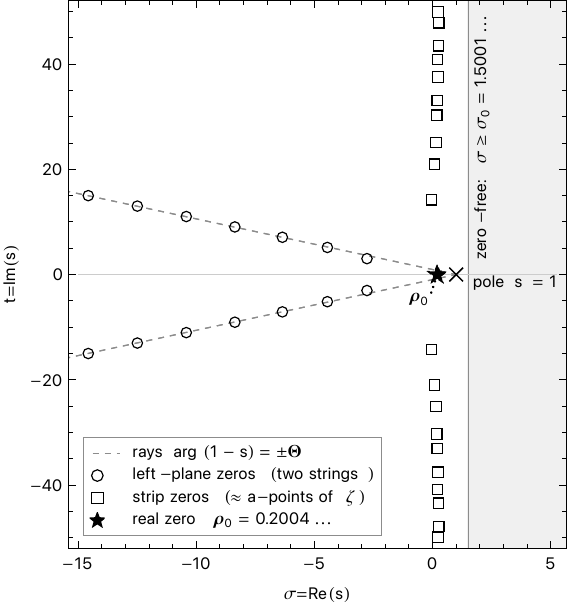}
\caption{Computed zeros of $\Dfun$ in the window $|t|\le50$. The two
strings of left-plane zeros recede along the dashed rays
$\arg(1-s)=\pm\Thstar$. The strip zeros shadow $a$-points
of~$\zeta$. The star marks the real zero $\rzero=0.2004\cdots$, and
the cross marks the pole at~$s=1$. The shaded half-plane
$\sigma\ge\szero$ is zero-free.
Tables~\ref{tab:left} and~\ref{tab:strip} extend beyond the plotted
range.}
\label{fig:zeros}
\end{figure}

The function $\Dfun$ is among the simplest Dirichlet series built
from special values of $\zeta$, and it serves as a test case for the
broader problem raised at the outset, that of describing the global
geometry of the zeros of a Dirichlet series in the absence of an
Euler product and a self-dual functional equation. Beyond $\Dfun$
itself, the results make two further points of contact with the
literature. The strip zeros realize the value-distribution theory of
$a$-points of $\zeta$ with the constant $a$ replaced by a bounded
analytic function, and the left-plane strings exhibit explicitly the
zero geometry of two-term exponential sums.

{\Large\section{Summary of main results}\label{sec:summary}}

This section collects the main results in one place. Every object
that appears below is defined at the indicated later point in the
paper, and nothing stated here is used in the proofs, so the section
can be read independently of the rest.

\bigskip\noindent{\bf Continuation and symmetry.}
The decomposition $\Dfun=\zeta+\Efun$ of \eqref{eq:split}, in which
$\Efun$ is entire, continues $\Dfun$ meromorphically to $\C$; the
only singularity is a simple pole at $s=1$ of residue one. The
coefficients $\zeta(2n)$ are real and positive, so
$\overline{\Dfun(\overline s)}=\Dfun(s)$, and the zero set of
$\Dfun$ is symmetric about the real axis.

\bigskip\noindent{\bf The functional equation.}
All results derive from an exact functional equation of Hecke type
(Theorem~\ref{thm:fe}). In the compact form \eqref{eq:hecke}, it
reads
\[
\Dfun(s)=\Gamma(1-s)\,\Zfun(1-s)
\qquad(\sigma<0),
\]
where the dual series $\Zfun$ runs over the frequency set $\Om$ of
\eqref{eq:freq}, consisting of the complex logarithms of the perfect
squares, under the mandatory grouping of terms explained in
Remark~\ref{rem:order}. The $k=1$ column of the dual series is
Riemann's functional equation \eqref{eq:rfe}. The frequencies of
smallest modulus are $\omflat=2\log2$, contributed by $\Efun$, and
$\pm\omsharp=\pm2\pi i$, contributed by $\zeta$, as recorded in
\eqref{eq:omflat} and \eqref{eq:omsharp}; interference between the
frequencies determines the left-plane zeros.

\bigskip\noindent{\bf The zero set.}
Figure~\ref{fig:zeros} displays the computed zeros, and
Table~\ref{tab:summary} summarizes the description of the zero set
by region.

\bigskip

\begin{table}[ht]
\centering
\setlength{\tabcolsep}{12pt}
\begin{tabular}{|l|l|l|}
\hline
\rule{0pt}{2.6ex}%
region & zeros of $\Dfun$ & result \\[4pt]
\hline
\rule{0pt}{2.6ex}%
$\sigma\ge\szero$ & none & Proposition~\ref{prop:right} \\
real axis & $\rzero=0.2004\cdots$
& Propositions~\ref{prop:real}, \ref{prop:mono} \\
$-\tfrac14\le\sigma\le\szero$ & perturbed $a$-points of $\zeta$
& Theorem~\ref{thm:rvm} \\
$-C\le\sigma\le-\delta$ & finitely many & Proposition~\ref{prop:middle} \\
$\sigma\le-\tszero$ & two simple strings & Theorem~\ref{thm:strings} \\[5pt]
\hline
\end{tabular}
\vspace{8pt}
\caption{The zero set of $\Dfun$ by region. Thresholds and constants
are defined at the indicated results.}
\label{tab:summary}
\end{table}

In the right half-plane, we have $\Dfun(s)\ne0$ for
$\sigma\ge\szero$, where $\szero=1.5001\cdots$ is the abscissa
characterized in Proposition~\ref{prop:right}. On the real axis,
$\Dfun(\sigma)>0$ holds for $\sigma\le0$ and for $\sigma>1$, while
$\Dfun(1^{-})=-\infty$, so $\Dfun$ vanishes at some
$\rzero\in(0,1)$. Moreover, $\Dfun$ decreases strictly on $(0,1)$,
so $\rzero$ is the unique real zero
(Propositions~\ref{prop:real}, \ref{prop:mono}); numerically, we have
$\rzero=0.2004\cdots$.

In the critical strip, every zero $\rho$ with $\Re\,\rho\ge-\delta$
satisfies $\zeta(\rho)=-\Efun(\rho)$ and is therefore an $a$-point
of $\zeta$ for a value of $a$ within the explicit distance
$r(\delta)$ of $\azero=-(\zeta(2)-1)=-0.6449\cdots$; see
\eqref{eq:Econst}. The count of these zeros obeys a Riemann--von
Mangoldt law (Theorem~\ref{thm:rvm}), namely,
\[
N_{\Dfun}(T)=\frac{T}{2\pi}\log\frac{T}{2\pi}-\frac{T}{2\pi}
+O(\log T),
\]
in agreement to this order with the counting function of the zeros
of $\zeta$ itself. In addition, every fixed strip
$-C\le\sigma\le-\delta$ contains only finitely many zeros
(Proposition~\ref{prop:middle}).
In the left half-plane, there exists $\tszero>0$ with the following
property. Every zero of $\Dfun$ with $\sigma\le-\tszero$ is simple,
and these zeros form two complex-conjugate strings
\[
{\boldsymbol\rho}_{m}=1-\frac{i\pi(2m+1)}{\sL}+O(\er^{-\kappa m})
\qquad(m\ge m_{0}),
\]
where $\sL=\log(\omsharp/\omflat)$ and $\kappa>0$
(Theorem~\ref{thm:strings}, Section~\ref{sec:left}). The strings
recede along the rays $\arg(1-s)=\pm\Thstar$, consecutive zeros on
each string have limiting spacing $2\pi/|\sL|$, and the number of
left-plane zeros of modulus at most $R$ equals $(|\sL|/\pi)R+O(1)$.

\bigskip\noindent{\bf Constants.}
The constants of Theorem~\ref{thm:strings} evaluate as
\[
\sL=\log\frac{\pi}{\log2}+\frac{i\pi}{2},
\qquad
\Thstar=\frac{\pi}{2}-\arg\sL=0.7660\cdots,
\qquad
\frac{2\pi}{|\sL|}=2.8825\cdots,
\]
and the exponential rate derives from the tail exponent
$\eta=0.0237661\cdots$ of Lemma~\ref{lem:tail} and
Remark~\ref{rem:eta}, any
$\kappa\le\eta\pi^{2}/(2|\sL|^{2})=0.0246\cdots$ being admissible.

\bigskip\noindent{\bf Numerics.}
Table~\ref{tab:left} compares the first zeros of the lower string
with the asymptotic of Theorem~\ref{thm:strings}, the error decaying
geometrically, and Table~\ref{tab:strip} pairs each strip zero of
ordinate below $75$ with the $a$-point of $\zeta$ that it shadows.
Section~\ref{sec:comp} describes the computations.

\bigskip\noindent{\bf Organization.}
Section~\ref{sec:fe} proves the functional equation.
Section~\ref{sec:zerofree} treats the zero-free half-plane, the real
zeros, and the intermediate strips. Section~\ref{sec:left}
establishes the string theorem, and Section~\ref{sec:strip} the
counting law. Section~\ref{sec:comp} records the computational
details, and Section~\ref{sec:concl} poses three questions.

{\Large\section{The functional equation}\label{sec:fe}}

This section establishes the functional equation of Hecke type for
$\Dfun$. Theorem~\ref{thm:fe} expresses $\Dfun(s)$ in the half-plane
$\sigma<0$ as a gamma factor times a dual series over the frequency
set $\Om$ of \eqref{eq:freq}, Remark~\ref{rem:order} explains why the
dual series converges only under a mandatory grouping of terms, and
Remark~\ref{rem:spectral} locates the frequencies on the Mellin transform side.

Our starting point is the Lipschitz summation formula
\cite{Lipschitz}, in the form of Hurwitz's formula for the periodic
zeta function given in \cite[\S1.11]{HTF} and
\cite[\S25.11(viii), \S25.12(ii)]{DLMF}; an elementary proof appears
in \cite{KR}. For every $\lambda>0$, we have
\be\label{eq:lipschitz}
\Li_{s}(\er^{-\lambda})
=\Gamma(1-s)\sum_{\ell\in\Z}(\lambda+2\pi i\,\ell)^{\,s-1}
\qquad(\sigma<0),
\ee
where powers are computed using the principal branch of the logarithm.
The series in \eqref{eq:lipschitz} converges absolutely for
$\sigma<0$,
since $|\lambda+2\pi i\ell|^{\sigma-1}\ll|\ell|^{\sigma-1}$ for each
$\ell\ne0$. 

The degenerate case $\lambda=0$ of \eqref{eq:lipschitz}, with the
term $\ell=0$ removed, is the asymmetric version of Riemann's
functional equation,
\be\label{eq:rfe}
\zeta(s)=\Gamma(1-s)\sum_{\ell\ne0}(2\pi i \ell)^{\,s-1}
=\cX(s)\,\zeta(1-s),
\ee
where
\[
\cX(s)\defeq2^{s}\pi^{s-1}\sin\tfrac{\pi s}{2}\,\Gamma(1-s).
\]

\bigskip

\begin{theorem}\label{thm:fe}
For $\sigma<0$, we have
\be\label{eq:fe}
\Dfun(s)=\Gamma(1-s)\sum_{k=1}^{\infty}\ \sideset{}{'}\sum_{\ell\in\Z}\,
(2\log k+2\pi i \ell)^{s-1},
\ee
where the prime indicates that the term $(k,\ell)=(1,0)$ is omitted.
The inner sum converges absolutely for each $k$, as does the outer
sum over $k$. Equivalently,
\be\label{eq:fe2}
\Dfun(s)=\cX(s)\,\zeta(1-s)+\Gamma(1-s)
\sum_{k=2}^{\infty}\sum_{\ell\in\Z}(2\log k+2\pi i\ell)^{s-1}.
\ee
\end{theorem}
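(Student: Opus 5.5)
Combine the polylogarithm expansion \eqref{eq:polylog} with the Lipschitz formula \eqref{eq:lipschitz}, applied term by term.

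For $\sigma<0$ and $k\ge2$, put $\lambda=2\log k>0$ in \eqref{eq:lipschitz}. Since $\er^{-\lambda}=k^{-2}$, this gives
\[
\Li_s(k^{-2})=\Gamma(1-s)\sum_{\ell\in\Z}(2\log k+2\pi i\ell)^{s-1}.
\]
The inner series converges absolutely, as already noted after \eqref{eq:lipschitz}.

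For $k=1$ we use $\Li_s(1)=\zeta(s)$ together with \eqref{eq:rfe}:
\[
\zeta(s)=\Gamma(1-s)\sum_{\ell\ne0}(2\pi i\ell)^{s-1}=\cX(s)\zeta(1-s).
\]
This is exactly the $k=1$ column of \eqref{eq:fe} with the term $(1,0)$ deleted. Summing over $k$ via \eqref{eq:polylog}, valid on all of $\C$ for $\Efun$ and hence for $\Dfun=\zeta+\Efun$, gives both \eqref{eq:fe} and \eqref{eq:fe2}. The sum over $k$ converges because the series $\sum_{k\ge2}\Li_s(k^{-2})$ converges absolutely for every $s$: we have $|\Li_s(k^{-2})|\le\sum_m k^{-2m}m^{-\sigma}\ll_\sigma k^{-2}$. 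Consequently the outer series of bracketed inner sums converges absolutely, which is how I read the claim ``as does the outer sum over $k$''.

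The one point needing care is the meaning of absolute convergence of the outer sum. A naive bound on the full double sum $\sum_{k,\ell}|2\log k+2\pi i\ell|^{\sigma-1}$ fails for $-1\le\sigma<0$. For fixed $\ell\neq0$ the terms are about $|\ell|^{\sigma-1}$ uniformly for $k\le \er^{|\ell|}$, so the double series diverges. The statement must therefore be interpreted, and proved, as absolute convergence of $\sum_k|\Li_s(k^{-2})/\Gamma(1-s)|$, with each inner sum evaluated first. This is the ``mandatory grouping'' alluded to in Remark~\ref{rem:order}.

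The key check in the proof is then only the uniform decay $\Li_s(k^{-2})=k^{-2}+O(k^{-4})$. This is immediate because the Dirichlet series $\sum_m u^m m^{-s}$ with $u=k^{-2}\le 1/4$ is dominated by $u\sum_m 4^{-(m-1)}m^{|\sigma|}$. Nothing beyond \eqref{eq:lipschitz}, \eqref{eq:rfe}, and \eqref{eq:polylog} is required.
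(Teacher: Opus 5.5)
Your proposal is correct and follows essentially the same route as the paper. The paper also applies the Lipschitz formula with $\lambda=2\log k$ for each $k\ge2$, uses Riemann's functional equation for the $k=1$ column, and justifies summing over $k$ with the bound $|\Li_s(z)|\le z\sum_{m}4^{-(m-1)}m^{|\sigma|}$ for $0<z\le\tfrac14$, reading ``absolute convergence'' of the outer sum in the same grouped sense. As a minor aside, your own row-by-row estimate shows that the ungrouped double series diverges for every $\sigma$, not only for $-1\le\sigma<0$, as Remark~\ref{rem:order} notes.
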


\begin{proof}
Fix $s$ with $\sigma<0$. Apply \eqref{eq:lipschitz} with
$\lambda=2\log k$ to each term of \eqref{eq:polylog} with $k\ge2$,
and \eqref{eq:rfe} to the term $k=1$. It remains to justify summing
the identities over $k$. The left-hand sides are absolutely summable,
since for $0<z\le\tfrac14$ we have
\[
\bigl|\Li_{s}(z)\bigr|\le\sum_{m\in\N}z^{m}m^{|\sigma|}
\le z\sum_{m\in\N}4^{-(m-1)}m^{|\sigma|}=z\cdot C(\sigma)\quad\text{(say)},
\]
and thus
\[
\sum_{k\ge2}|\Li_{s}(k^{-2})|\le
C(\sigma)\sum_{k\ge2}k^{-2}<\infty.
\]
Since $\Gamma(1-s)\ne0$, the
inner sums in \eqref{eq:fe} are likewise absolutely summable
over $k$, and summing the identities termwise yields \eqref{eq:fe}.
\end{proof}

\begin{remark}\label{rem:order}
We emphasize that the double series in \eqref{eq:fe} is
\emph{not} absolutely convergent as a sum over the frequency set
$\Om$ of \eqref{eq:freq}.
For fixed $\ell$, the sum
\[
\sum_{k\ge2}\bigl|2\log k+2\pi i\ell\bigr|^{\sigma-1}
\]
diverges for every $\sigma$, since $(\log k)^{-u}$ is not summable
over $k\ge2$ for any $u\in\R$. Convergence in \eqref{eq:fe} derives from the
cancellation that occurs in each inner sum over $\ell$. Indeed, for
$k\ge 2$,
\[
\sum_{\ell\in\Z}(2\log k+2\pi i \ell)^{s-1}
=\frac{\Li_{s}(k^{-2})}{\Gamma(1-s)}\ll k^{-2}.
\]
Thus \eqref{eq:fe} should be interpreted as a Hecke-type
identity
\be\label{eq:hecke}
\Dfun(s)=\Gamma(1-s)\,\Zfun(1-s)
\qquad\text{with}\quad
\Zfun(w)\defeq\sum_{\om\in\Om}\om^{-w},
\ee
in which the dual series $\Zfun$ carries a mandatory grouping of terms.
The set $\Om$ collects every value of $\log k^{2}$ over $k\in\N$,
one for each branch of the logarithm, so \eqref{eq:hecke} trades the
real frequencies $\log n$ of the Dirichlet series \eqref{eq:def} for
the complex logarithms of the perfect squares.
\end{remark}

\begin{remark}\label{rem:spectral}
The frequencies in \eqref{eq:freq} can be seen on the Mellin
transform side. For $u>0$, summing a geometric series in each term yields
\[
\sum_{n\in\N}\zeta(2n)\er^{-nu}
=\sum_{m\in\N}\frac{\er^{-u}}{m^{2}-\er^{-u}},
\]
which is the classical expansion
$\sum_{n\in\N}\zeta(2n)x^{2n}=\tfrac12(1-\pi x\cot\pi x)$ evaluated at
$x=\er^{-u/2}$. Taking Mellin transforms, we obtain, for $\sigma>1$,
\[
\Gamma(s)\,\Dfun(s)
=\int_{0}^{\infty}u^{s-1}\Bigl(\frac{1}{\er^{u}-1}+G(u)\Bigr)\dd u
\qquad\text{with}\quad
G(w)\defeq\sum_{m\ge2}\frac{\er^{-w}}{m^{2}-\er^{-w}}.
\]
Here $G$ is meromorphic on \C with simple poles, each of residue one,
exactly at the points $w=-\om_{k,\ell}$ with $k\ge2$, while
$1/(\er^{w}-1)$ contributes the poles $w=-\om_{1,\ell}$ for $\ell\ne0$,
together with a pole at $w=0$ reflecting the pole of $\Dfun(s)$ at $s=1$.
Theorem~\ref{thm:fe} is the statement that the analytic continuation
of the Mellin transform is assembled from these poles, in perfect
analogy with Riemann's first proof of \eqref{eq:rfe}.
\end{remark}

{\Large\section{Zero-free regions and real zeros}\label{sec:zerofree}}

This section treats the zeros of $\Dfun$ away from the critical
strip, together with the real zero. Proposition~\ref{prop:right}
produces a zero-free half-plane $\sigma\ge\szero$,
Proposition~\ref{prop:real} locates a real zero $\rzero\in(0,1)$,
Proposition~\ref{prop:mono} shows that it is the only one, and
Proposition~\ref{prop:middle} shows that each strip
$-C\le\sigma\le-\delta$ contains only finitely many zeros.

\bigskip

\begin{proposition}\label{prop:right}
Let $\szero=1.500127440\cdots$ be the unique real root of
\[
\sum_{n\ge2}\zeta(2n)n^{-\sigma}=\zeta(2).
\]
Then $\Dfun(s)\ne0$ for $\sigma\ge\szero$.
\end{proposition}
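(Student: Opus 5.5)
This is a dominant-term argument: for $\sigma>1$ the series \eqref{eq:def} converges absolutely, and the first term should dominate the rest. Write
\[
\Dfun(s)=\zeta(2)+\sum_{n\ge2}\zeta(2n)n^{-s},
\]
so that, by the triangle inequality,
\[
|\Dfun(s)|\ge\zeta(2)-\sum_{n\ge2}\zeta(2n)n^{-\sigma}.
\]
Set $F(\sigma)\defeq\sum_{n\ge2}\zeta(2n)n^{-\sigma}$ for $\sigma>1$. It then suffices to show that $F(\sigma)<\zeta(2)$ for $\sigma>\szero$, and to handle the boundary line $\sigma=\szero$ separately.

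The first step is to analyse $F$. It is continuous and strictly decreasing on $(1,\infty)$, since each term $\zeta(2n)n^{-\sigma}$ is positive and strictly decreasing in $\sigma$. It tends to $+\infty$ as $\sigma\to1^{+}$, because $\zeta(2n)\ge1$ and the harmonic series diverges. It tends to $0$ as $\sigma\to\infty$, by dominated convergence, since $\zeta(2n)\le\zeta(2)$ and $n^{-\sigma}\le n^{-2}$ for $\sigma\ge2$. Hence the equation $F(\sigma)=\zeta(2)$ has exactly one real root $\szero$ in $(1,\infty)$, which justifies the description in the statement. Moreover $F(\sigma)<\zeta(2)$ for every $\sigma>\szero$, so $|\Dfun(s)|>0$ on the open half-plane $\sigma>\szero$.

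The only delicate point is the boundary line $\sigma=\szero$, where the triangle inequality gives only $|\Dfun(s)|\ge0$. Equality in
\[
\Bigl|\zeta(2)+\sum_{n\ge2}\zeta(2n)n^{-s}\Bigr|\ge\zeta(2)-\sum_{n\ge2}\zeta(2n)n^{-\szero}
\]
would require every term $\zeta(2n)n^{-s}$ with $n\ge2$ to point in the direction opposite to $\zeta(2)>0$. That is, we would need $n^{-it}=-1$ for all $n\ge2$. This is impossible: $n=2$ and $n=4$ would give $2^{-it}=-1$ and $4^{-it}=(2^{-it})^2=1\ne-1$. The inequality is therefore strict, and $\Dfun(s)\ne0$ on $\sigma=\szero$ as well.

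Finally, the numerical value $\szero=1.500127440\cdots$ is not needed for the proof. It can be certified by evaluating $F$ with rigorous tail bounds, using $\zeta(2n)=1+O(4^{-n})$ to reduce the tail to Hurwitz-zeta or integral estimates, but the argument above does not rely on it.
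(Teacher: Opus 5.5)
Your proof is correct and follows the paper's argument: split off the $n=1$ term, use the monotonicity of $\sum_{n\ge2}\zeta(2n)n^{-\sigma}$ to handle $\sigma>\szero$, and treat the line $\sigma=\szero$ via the equality case of the triangle inequality. The only difference is on that line. The paper shows that equal phases force $t=0$ and then uses $\Dfun(\szero)=2\zeta(2)>0$, whereas you observe that the common phase must be $-1$ and get a contradiction directly from $n=2$ and $n=4$, which is a slightly quicker finish.
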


\begin{proof}
Splitting off the term $n=1$ in \eqref{eq:def}, we have
\be\label{eq:righttail}
\Dfun(s)-\zeta(2)=\sum_{n\ge2}\frac{\zeta(2n)}{n^{s}}\qquad(\sigma>1).
\ee
By the triangle inequality and the positivity of the coefficients
$\zeta(2n)$,
\be\label{eq:righttri}
\bigl|\Dfun(s)-\zeta(2)\bigr|\le h(\sigma)
\qquad\text{with}\quad
h(\sigma)\defeq\sum_{n\ge2}\frac{\zeta(2n)}{n^{\sigma}}.
\ee
The function $h$ is strictly decreasing on $(1,\infty)$, with
$h(1^{+})=\infty$ and $h(\infty)=0$; hence $\szero$ is the
unique root of $h(\sigma)=\zeta(2)$.

For all $\sigma>\szero$, the bound \eqref{eq:righttri} shows
\[
|\Dfun(s)-\zeta(2)|\le h(\sigma)<h(\szero)=\zeta(2)
\]
and thus $\Dfun(s)\ne 0$. Suppose now that
$\Dfun(s)=0$ for some $s$ with $\sigma=\szero$. Then equality holds
in~\eqref{eq:righttri}, leading to
\[
\biggl|\sum_{n\ge2}\frac{\zeta(2n)}{n^{s}}\biggr|
=\sum_{n\ge2}\frac{\zeta(2n)}{n^{\sigma}},
\]
which forces the phases $n^{-it}$ to have the same value for all
$n\ge 2$. This occurs only if $t=0$, that is, only if $s=\szero$.
However, \eqref{eq:righttail} and the definition of $\szero$ yield
\[
\Dfun(\szero)=\zeta(2)+h(\szero)=2\,\zeta(2)>0,
\]
a contradiction. Therefore $\Dfun(s)\ne0$ for all $\sigma\ge\szero$.
\end{proof}

\begin{proposition}\label{prop:real}
We have $\Dfun(\sigma)>0$ for all $\sigma\le 0$ and all
$\sigma>1$, and $\Dfun(1^{-})=-\infty$. Consequently,
$\Dfun(\rzero)=0$ for some $\rzero\in(0,1)$; numerically,
$\rzero=0.200411339\cdots$.
\end{proposition}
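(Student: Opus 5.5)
The proof splits into four parts: $\sigma>1$, $\sigma\le0$, the limit at $1^-$, and the intermediate value theorem.

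For $\sigma>1$, positivity is immediate from \eqref{eq:def}, since every coefficient $\zeta(2n)$ is positive.

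For $\sigma\le0$, I will use the polylogarithm representation \eqref{eq:polylog}, $\Dfun(\sigma)=\zeta(\sigma)+\Efun(\sigma)$. Here $\Efun(\sigma)=\sum_{k\ge2}\Li_\sigma(k^{-2})$, and each term $\Li_\sigma(k^{-2})=\sum_m k^{-2m}m^{-\sigma}$ is positive. Since $-\sigma\ge0$, we have $m^{-\sigma}\ge1$, so termwise comparison with $\sigma=0$ gives $\Efun(\sigma)\ge\Efun(0)=\tfrac34$. On the other hand, $\zeta(\sigma)$ is negative on part of $(-\infty,0]$. Its values there satisfy $\zeta(0)=-\tfrac12$, $\zeta$ vanishes at the trivial zeros, and $|\zeta(\sigma)|$ grows rapidly as $\sigma\to-\infty$. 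So the crude bound $\Efun\ge\tfrac34$ only handles the range where $|\zeta(\sigma)|<\tfrac34$, roughly $-3\lesssim\sigma\le0$.

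To cover all of $\sigma\le0$ uniformly, I will instead use Theorem~\ref{thm:fe} via the Lipschitz formula \eqref{eq:lipschitz}. For $\sigma<0$, combining the $k=1$ term with the $k\ge2$ terms gives
\[
\Dfun(\sigma)=\Gamma(1-\sigma)\sum_{k\ge1}\ \sideset{}{'}\sum_{\ell\in\Z}(2\log k+2\pi i\ell)^{\sigma-1}.
\]
Each inner sum over $\ell$ is real, since the $\pm\ell$ terms are conjugate. For $k\ge2$ the inner sum is simply $\Li_\sigma(k^{-2})/\Gamma(1-\sigma)>0$. The difficulty is therefore only the $k=1$ column, which is negative exactly when $\zeta(\sigma)<0$. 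I will handle it by a direct comparison in the region where $\zeta(\sigma)<0$, namely $\sigma\in(-4j-2,-4j)$. There I will bound $|\zeta(\sigma)|=|\cX(\sigma)|\,\zeta(1-\sigma)$ by $\Gamma(1-\sigma)(2\pi)^{\sigma-1}\cdot 2\zeta(1-\sigma)$, and compare it with the single dominant $k=2$ term. That term is
\[
\Li_\sigma(\tfrac14)\ge\Gamma(1-\sigma)\Bigl(\Re\,(2\log2)^{\sigma-1}-\sum_{\ell\ne0}|2\log2+2\pi i\ell|^{\sigma-1}\Bigr),
\]
and its leading part is $\Gamma(1-\sigma)(2\log2)^{\sigma-1}$. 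Since $2\log2<2\pi$, the ratio of the $\zeta$ contribution to this main term is $O\bigl((\log2/\pi)^{1-\sigma}\bigr)\le (\log 2/\pi)\cdot O(1)$, which is less than $1$ for all $\sigma\le0$ once constants are checked. For example, at $\sigma=0$ one has $\Li_0(1/4)=1/3$ versus $|\zeta(0)|=1/2$, so the plain ratio fails there. The correct route is therefore to combine both arguments: use $\Efun\ge\tfrac34>|\zeta|$ on $[-3,0]$, where $|\zeta|\le\tfrac12$ (the maximum of $|\zeta|$ on the first negative interval $(-4,-2)$ is about $0.0078$), and use the asymptotic domination by $\Li_\sigma(1/4)\sim\Gamma(1-\sigma)(2\log2)^{\sigma-1}$ versus $|\zeta(\sigma)|\le 2\Gamma(1-\sigma)(2\pi)^{\sigma-1}\zeta(2)$ on $\sigma\le-3$. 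On that range the ratio is at most about $2\zeta(2)(\log2/\pi)^{4}<0.01$. The error from the $\ell\ne0$ terms at $k=2$ is itself $\ll(2\log2/2\pi)^{1-\sigma}$ relative to the main term, which is also tiny.

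At $s=1$, the Laurent expansion $\Dfun(s)=(s-1)^{-1}+\gamma+\log2+O(s-1)$ gives $\Dfun(\sigma)\to-\infty$ as $\sigma\to1^-$. Since $\Dfun(0)=\tfrac14>0$ and $\Dfun$ is continuous on $[0,1)$, the intermediate value theorem produces a zero $\rzero\in(0,1)$. The numerical value comes from evaluating $\zeta(\sigma)+\Efun(\sigma)$ by rapidly convergent series and bisecting.

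The main obstacle is the explicit constant bookkeeping in the uniform comparison on $\sigma\le-3$, in particular controlling the $\ell\ne0$ terms for $k=2$ together with the whole column $k\ge3$, which is positive and can simply be dropped.
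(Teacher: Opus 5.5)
Your proof is correct, and its overall structure matches the paper's. Near $0$ you use $\Efun(\sigma)\ge\Efun(0)=\tfrac34>\tfrac12\ge|\zeta(\sigma)|$. Farther out you drop the positive columns $k\ge3$ (equivalently, use $\zeta(2n)-1\ge4^{-n}$) and show that $\Li_\sigma(\tfrac14)$ beats $|\zeta(\sigma)|$ once the common factor $\Gamma(1-\sigma)$ cancels.

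The real difference is how you bound $\Li_\sigma(\tfrac14)$ from below.

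\textbf{The paper's route.} It compares the unimodal sum $\sum_m m^{x}4^{-m}$ with $\int_0^\infty u^{x}4^{-u}\,du=\Gamma(x+1)/(\log4)^{x+1}$ and subtracts the maximal summand. It then uses Stirling to absorb that loss for $x\ge1.3$. This gives $\Li_{-x}(\tfrac14)\ge\Gamma(x+1)/(2(\log4)^{x+1})$ and a final ratio of $0.177$.

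\textbf{Your route.} You apply the Lipschitz formula \eqref{eq:lipschitz} with $\lambda=\log4$ and keep the $\ell=0$ term $\Gamma(1-\sigma)(\log4)^{\sigma-1}$ exactly. You bound the $\ell\ne0$ terms by $2\Gamma(1-\sigma)(2\pi)^{\sigma-1}\zeta(1-\sigma)$, which is the same quantity that bounds $|\zeta(\sigma)|$. The total relative loss is then at most $4\zeta(1-\sigma)(\log2/\pi)^{1-\sigma}<0.011$ for $\sigma\le-3$.

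\textbf{What each buys.} Your version needs no Stirling estimate and gives an almost sharp constant. It is also the real-axis case of the frequency dominance behind Lemma~\ref{lem:tail} and Theorem~\ref{thm:strings}: $2\log2$ against every frequency of modulus at least $2\pi$. The paper's version is entirely elementary and needs no summation formula. In both, the far-range bound degrades as $\sigma\to0^-$, which is why the separate near-zero interval is needed. For you this happens because $\zeta(1-\sigma)\to\infty$; for the paper it comes from the maximal-term loss in the integral comparison.

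One harmless slip: $\zeta$ is positive on $(-4,-2)$, and its maximum there is about $0.009$ (already $\zeta(-3)=1/120$), not $0.0078$. You only need $|\zeta|\le\tfrac12$ on $[-3,0]$, so nothing is affected.
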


\begin{proof}
For $\sigma>1$, every term of \eqref{eq:def} is positive, whence
$\Dfun(\sigma)>0$. Because $\zeta(1^{-})=-\infty$ and $\Efun$ is
bounded near $s=1$, we have $\Dfun(1^{-})=-\infty$, and since
$\Dfun(0)=\tfrac14$, a zero $\rzero\in(0,1)$ exists by
continuity.

\begin{figure}[ht]
\centering
\includegraphics[width=4.5truein]{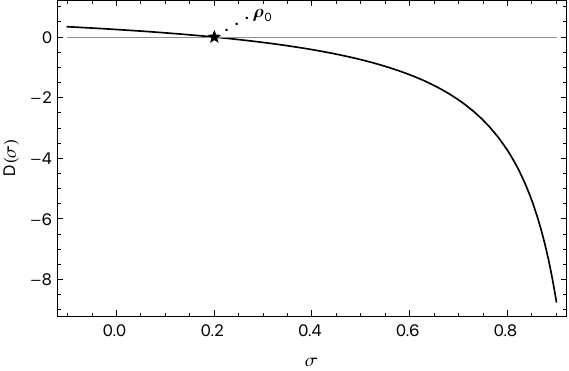}
\caption{$\Dfun(\sigma)$ on the real segment $(-0.1,\,0.9)$. The sign
change occurs at $\rzero=0.2004\cdots$, the only real zero of
$\Dfun$.}
\label{fig:real}
\end{figure}

It remains to prove that $\Dfun(-x)>0$ for all $x\ge0$. Since
$\zeta(2n)-1\ge4^{-n}$ for every $n\in\N$, we have
\[
\Efun(-x)\ge\sum_{m\in\N}m^{x}4^{-m}=\Li_{-x}\bigl(\tfrac14\bigr).
\]
Suppose first that $x\ge1.3$. The map $u\mapsto u^{x}4^{-u}$
increases on $(0,x/\log4]$ and decreases thereafter, attaining the
maximum value $M(x)\defeq(x/(\er\log4))^{x}$. Comparing the sum with
the integral, we find that
\[
\Li_{-x}\bigl(\tfrac14\bigr)\ge\int_{0}^{\infty}u^{x}4^{-u}\dd u-M(x)
=\frac{\Gamma(x+1)}{(\log4)^{x+1}}-M(x)
\ge\frac{\Gamma(x+1)}{2(\log4)^{x+1}},
\]
where the last step follows from Stirling's bound
$\Gamma(x+1)\ge\sqrt{2\pi x}\,(x/\er)^{x}$ together with the
inequality $\sqrt{2\pi x}\ge2\log4$, valid for $x\ge1.3$.
On the other hand, \eqref{eq:rfe} yields the upper bound
\[
|\zeta(-x)|\le\pi^{-1}(2\pi)^{-x}\Gamma(x+1)\,\zeta(x+1).
\]
The factors $\Gamma(x+1)$ cancel in the ratio, and we obtain
\[
\frac{|\zeta(-x)|}{\Efun(-x)}
\le 4\,\zeta(x+1)\Bigl(\frac{\log4}{2\pi}\Bigr)^{x+1}
\le 4\,\zeta(2.3)\Bigl(\frac{\log4}{2\pi}\Bigr)^{2.3}=0.177\cdots<1,
\]
since both factors on the right decrease for $x\ge 1.3$;
therefore,
\[
\Dfun(-x)\ge\Efun(-x)-|\zeta(-x)|>0.
\]

Now suppose that $0\le x\le1.3$. The coefficients of $\Efun$ are
positive, so $\Efun$ decreases on the real axis, and
$\Efun(-x)\ge\Efun(0)=\tfrac34$. Since $|\zeta(-x)|\le\tfrac12$ on
this interval, we conclude that $\Dfun(-x)\ge\tfrac14>0$.
\end{proof}

Proposition~\ref{prop:real} locates $\rzero$ but does not by itself
exclude further real zeros in $(0,1)$. The following result supplies
that, and completes the description of the zero set on the real axis
shown in Figure~\ref{fig:real}.

\begin{proposition}\label{prop:mono}
The function $\Dfun$ is strictly decreasing on $(0,1)$. Consequently,
$\rzero$ is the unique real zero of $\Dfun$.
\end{proposition}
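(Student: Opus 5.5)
The plan is to differentiate the decomposition \eqref{eq:split} on the real segment $(0,1)$ and show that each piece contributes a negative, or at least controllable, amount. The entire part is immediate. For real $\sigma$, termwise differentiation of the everywhere absolutely convergent series $\Efun$ gives
\[
\Efun'(\sigma)=-\sum_{n\ge2}\frac{(\zeta(2n)-1)\log n}{n^{\sigma}}<0 .
\]
This holds because every coefficient $\zeta(2n)-1$ is positive and $\log n>0$ for $n\ge2$. So it suffices to prove $\zeta'(\sigma)<0$ on $(0,1)$. Then $\Dfun'=\zeta'+\Efun'<0$ there, so $\Dfun$ is strictly decreasing on $(0,1)$. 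Combined with Proposition~\ref{prop:real}, which gives $\Dfun(\sigma)>0$ for $\sigma\le0$ and for $\sigma>1$, this leaves at most one zero in $(0,1)$, namely $\rzero$. Hence $\rzero$ is the unique real zero.

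For $\zeta'$ I will isolate the pole. Write $\zeta(\sigma)=\frac{1}{\sigma-1}+H(\sigma)$ with $H$ entire. Since $\frac{d}{d\sigma}\frac{1}{\sigma-1}=-\frac{1}{(\sigma-1)^2}\le-1$ on $(0,1)$, it is enough to show $H'(\sigma)<1$ on $[0,1]$. The sanity values are $H'(1)=-\gamma_1\approx0.07$ and $H'(0)=\zeta'(0)+1\approx0.08$, so there is plenty of room. To obtain a rigorous uniform bound, I will use second-order Euler--Maclaurin summation, valid for $\sigma>-1$:
\[
\zeta(\sigma)=\frac{1}{\sigma-1}+\frac12+\frac{\sigma}{12}
-\frac{\sigma(\sigma+1)}{2}\int_1^\infty \widetilde B_2(x)\,x^{-\sigma-2}\,\dd x .
\]
Here $\widetilde B_2(x)=B_2(\{x\})$, and it satisfies $|\widetilde B_2|\le\tfrac16$. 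Differentiating in $\sigma$ gives
\[
H'(\sigma)=\frac1{12}-\frac{2\sigma+1}{2}\,I(\sigma)+\frac{\sigma(\sigma+1)}{2}\,J(\sigma),
\]
where
\[
I(\sigma)=\int_1^\infty\widetilde B_2\,x^{-\sigma-2}\,\dd x,
\qquad
J(\sigma)=\int_1^\infty\widetilde B_2\,x^{-\sigma-2}\log x\,\dd x .
\]
The crude estimates $|I|\le\frac{1}{6(\sigma+1)}$ and $|J|\le\frac{1}{6(\sigma+1)^2}$ then give, on $[0,1]$,
\[
|H'(\sigma)|\le\frac1{12}+\frac{2\sigma+1}{12(\sigma+1)}+\frac{\sigma}{12(\sigma+1)}\le\frac1{12}+\frac18+\frac1{24}<1 .
\]
This yields $\zeta'(\sigma)\le-1+H'(\sigma)<0$ on $(0,1)$.

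The only real obstacle is making the estimate for $\zeta'$ on $(0,1)$ rigorous and explicit, since $\zeta$ has no sign-definite Dirichlet series there. The Euler--Maclaurin step handles this, but it needs care with differentiation under the integral sign, which is justified by uniform convergence for $\sigma>-1$. It also needs the constants to be checked, particularly the two endpoint terms. If the bookkeeping turned out awkward, my fallback would be to use the alternating series $(1-2^{1-\sigma})\zeta(\sigma)=\sum_n(-1)^{n-1}n^{-\sigma}$ and differentiate that instead. The pole-isolation approach above already leaves a wide margin, so I expect it to go through directly.
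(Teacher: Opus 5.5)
Your proof is correct, but at the one nontrivial step, the monotonicity of $\zeta$ on $(0,1)$, it takes a different route from the paper. The treatment of $\Efun$ and the deduction of uniqueness from Proposition~\ref{prop:real} are the same in both.

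The paper never differentiates. It writes $-\zeta(\sigma)=\zalt(\sigma)/(2^{1-\sigma}-1)$. It then combines the cited theorem that $\zalt$ is positive and nondecreasing on $(0,\infty)$ (Adell--Lekuona, van de Lune) with the strict decrease of the positive denominator. This is exactly your fallback.

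Your main route proves $\zeta'<0$ from scratch, and the bookkeeping you were worried about checks out. The second-order Euler--Maclaurin formula with endpoint terms $\tfrac12+\tfrac{\sigma}{12}$ is correct: it returns $\zeta(0)=-\tfrac12$, and $\zeta(-1)=-\tfrac1{12}$ in the limit $\sigma\to-1^{+}$. The expression for $H'$ and the bounds $|I|\le 1/(6(\sigma+1))$, $|J|\le 1/(6(\sigma+1)^2)$ are also right. Your final estimate collapses to $|H'(\sigma)|\le(4\sigma+2)/(12(\sigma+1))\le\tfrac14$, so in fact $\zeta'(\sigma)<-\tfrac34$ on $(0,1)$.

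What each approach buys:
\begin{itemize}
\item The paper's argument is shorter, but it relies on a nontrivial monotonicity theorem for $\zalt$ on the whole half-line.
\item Yours is self-contained and quantitative. Because it gives $\Dfun'<0$ on $(0,1)$ rather than mere strict monotonicity, it also shows at no extra cost that $\rzero$ is a simple zero of $\Dfun$.
\end{itemize}
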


\begin{proof}
Recall that Dirichlet's alternating zeta function
\[
\zalt(s)\defeq\sum_{n\in\N}\frac{(-1)^{n-1}}{n^{s}}=(1-2^{1-s})\zeta(s)
\qquad(\sigma>0)
\]
is positive and nondecreasing for real $s=\sigma\in(0,\infty)$; see \cite{AL}
(and \cite[Theorem~3]{vdL} for the original statement). We have
\[
-\zeta(\sigma)=\frac{\zalt(\sigma)}{2^{1-\sigma}-1}\qquad(0<\sigma<1),
\]
the denominator being positive. Let $0<\sigma<\sigma'<1$. Then
$\zalt(\sigma')\ge\zalt(\sigma)>0$,
and $0<2^{1-\sigma'}-1<2^{1-\sigma}-1$, therefore
\[
-\zeta(\sigma')=\frac{\zalt(\sigma')}{2^{1-\sigma'}-1}
\ge\frac{\zalt(\sigma)}{2^{1-\sigma'}-1}
>\frac{\zalt(\sigma)}{2^{1-\sigma}-1}=-\zeta(\sigma).
\]
Thus $\zeta$ is strictly decreasing on $(0,1)$. As noted in the proof
of Proposition~\ref{prop:real}, the coefficients of $\Efun$ are
positive, so $\Efun$ is decreasing on the real axis; by
\eqref{eq:split}, $\Dfun$ is strictly decreasing on $(0,1)$.

Proposition~\ref{prop:real} gives $\Dfun(\rzero)=0$ for some
$\rzero\in(0,1)$, and strict monotonicity makes that zero unique in
the interval. The same proposition gives $\Dfun(\sigma)>0$ for
$\sigma\le0$ and for $\sigma>1$, and $s=1$ is a pole; hence $\rzero$
is the only real zero of $\Dfun$.
\end{proof}

\begin{remark}
An earlier version of this paper stated Proposition~\ref{prop:mono}
as a conjecture. The argument above is due to Qiping Zhou, who
observed that the statement follows from the monotonicity of
Dirichlet's alternating zeta function on the positive axis.
\end{remark}

We turn from the real axis to the strips of the left half-plane,
where $\zeta$ outgrows the bounded function $\Efun$ along every
vertical line.

\begin{proposition}\label{prop:middle}
For all $C>\delta>0$, the strip $-C\le\sigma\le-\delta$ contains
only finitely many zeros of $\Dfun$.
\end{proposition}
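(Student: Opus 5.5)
The plan is to exploit the sentence preceding the statement: in the strip $-C\le\sigma\le-\delta$ the function $\zeta$ grows along vertical lines while $\Efun$ stays bounded, so zeros of $\Dfun=\zeta+\Efun$ can only occur at bounded height. Since $\Dfun$ is holomorphic in the strip (the only pole is at $s=1$) and does not vanish identically, its zeros are isolated. Any bounded rectangle $-C\le\sigma\le-\delta$, $|t|\le T_0$ therefore contains only finitely many zeros, and it suffices to produce $T_0$ with $\Dfun(s)\ne0$ for $|t|\ge T_0$ in the strip. By the symmetry $\overline{\Dfun(\overline s)}=\Dfun(s)$ we may take $t\ge T_0$.

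The first step is a uniform bound on $\Efun$. For $\sigma\ge-C$ the Dirichlet series for $\Efun$ gives
\[
|\Efun(s)|\le\sum_{n\in\N}(\zeta(2n)-1)\,n^{C}=\Efun(-C)\eqdef B(C),
\]
which is finite because $\zeta(2n)-1\ll4^{-n}$.

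The second step is a lower bound for $|\zeta(s)|$ from \eqref{eq:rfe}, $\zeta(s)=\cX(s)\zeta(1-s)$. On the strip we have $\Re(1-s)=1-\sigma\ge1+\delta$. Hence
\[
|\zeta(1-s)|\ge\frac{\zeta(2(1+\delta))}{\zeta(1+\delta)},
\]
using $|\zeta(w)|\ge\prod_p(1+p^{-u})^{-1}=\zeta(2u)/\zeta(u)$ for $u=\Re w>1$; this is positive and depends only on $\delta$. Stirling's formula gives
\[
|\Gamma(1-s)|\asymp|t|^{1/2-\sigma}\er^{-\pi|t|/2}
\]
uniformly for $\sigma$ in the compact range $[-C,-\delta]$. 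We also have $|\sin(\pi s/2)|\asymp\er^{\pi|t|/2}$ for $|t|\ge1$, and $|2^s\pi^{s-1}|$ is bounded below there. Together these give
\[
|\cX(s)|\gg_{C}|t|^{1/2-\sigma}\ge|t|^{1/2+\delta}.
\]
So $|\zeta(s)|\ge c(\delta,C)\,|t|^{1/2+\delta}$ for $|t|\ge1$.

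The conclusion follows by choosing $T_0$ so that $c(\delta,C)T_0^{1/2+\delta}>B(C)$. Then for $|t|\ge T_0$,
\[
|\Dfun(s)|\ge|\zeta(s)|-|\Efun(s)|>0.
\]
The only step needing care is making the Stirling estimate uniform in $\sigma$ on the compact interval, which is standard. It is not really an obstacle, although the dependence of the constants on $\delta$ (blowing up as $\delta\to0$) is exactly why the strip must stay away from $\sigma=0$.
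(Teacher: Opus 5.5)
Your proof is correct and follows the paper's route: you bound $|\Efun(s)|\le\Efun(-C)$ on $\sigma\ge-C$, bound $|\zeta(s)|$ from below through \eqref{eq:rfe} using the Euler-product estimate $|\zeta(1-s)|\ge\zeta(2+2\delta)/\zeta(1+\delta)$ and the $|t|^{1/2-\sigma}$ growth of $\cX$, and conclude from the finiteness of zeros in a compact rectangle. The only difference is that you derive the size of $\cX$ from Stirling's formula and the sine factor separately, whereas the paper quotes $|\cX(\sigma+it)|\asymp(|t|/2\pi)^{1/2-\sigma}$ directly.
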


\begin{proof}
The function $\Efun$ satisfies $|\Efun(s)|\le\Efun(-C)$ on the
half-plane $\sigma\ge-C$. Using \eqref{eq:rfe}, together with
the asymptotic $|\cX(\sigma+it)|\asymp(|t|/2\pi)^{1/2-\sigma}$
and the elementary lower bound $|\zeta(w)|\ge\zeta(2u)/\zeta(u)$
for $u\defeq\Re\,w>1$, we have
\be\label{eq:zetabound}
|\zeta(s)|=|\cX(s)|\,|\zeta(1-s)|\ggsym{C}|t|^{1/2+\delta}\,
\frac{\zeta(2+2\delta)}{\zeta(1+\delta)}
\qquad(-C\le\sigma\le-\delta,~|t|\ge2).
\ee
Hence $|\zeta(s)|>|\Efun(s)|$, and so $\Dfun(s)\ne0$, once $|t|$ is
large enough. The zeros of $\Dfun$ in the strip therefore lie in a
compact rectangle, and since $\Dfun\not\equiv0$, they are finite in
number.
\end{proof}

{\Large\section{Zeros in the left half-plane}\label{sec:left}}

This section treats the zeros of $\Dfun$ in the left half-plane.
Theorem~\ref{thm:strings} shows that all zeros of sufficiently large
modulus recede along two explicit rays emanating from the point
$s=1$, locates each zero to within an exponentially small error, and
determines the limiting spacing between consecutive zeros. The
mechanism is interference. For $\sigma<0$, the dual series of
Theorem~\ref{thm:fe} is controlled by its two leading frequencies,
one contributed by the entire part $\Efun$ and the other by $\zeta$,
and the zeros settle on the interface where the two contributions
balance. Throughout, we write
\[
w\defeq1-s,\qquad u\defeq\Re\,w,\qquad v\defeq\Im\,w,
\]
so that $\sigma<0$ corresponds to $u>1$, and we recall from
\eqref{eq:omflat} and \eqref{eq:omsharp} the two leading frequencies
of $\Om$,
\[
\omflat=\om_{2,0}=2\log2,\qquad\omsharp=\om_{1,1}=2\pi i.
\]
The formulation of the theorem involves the constants
\[
\sL\defeq\log(\omsharp/\omflat)
\mand
\Thstar\defeq\frac{\pi}{2}-\arg\sL,
\]
where the principal branch of the logarithm is used. We set
\[
A\defeq\Re(\sL)=\log\frac{\pi}{\log2}
\mand
L\defeq|\sL|,
\]
so that
\be\label{eq:omom}
\sL=A+i\pi/2=L\,\er^{i(\pi/2-\Thstar)}.
\ee

\bigskip

\begin{theorem}\label{thm:strings}
There exists $\tszero>0$ such that every zero of $\Dfun$ with
$\sigma\le-\tszero$ is simple, and these zeros form two
complex-conjugate sequences $\{{\boldsymbol\rho}_{m},\overline{{\boldsymbol\rho}_{m}}\}_{m\ge m_{0}}$
satisfying
\be\label{eq:strings}
{\boldsymbol\rho}_{m}=1-\frac{i\pi(2m+1)}{\sL}+O(\er^{-\kappa m})
\ee
for some $\kappa>0$. In particular, the zeros recede along the two
rays $\arg(1-s)=\pm\Thstar$, consecutive zeros on each string have
limiting spacing $2\pi/|\sL|$, and
\[
\Bigl|\{\rho:\Dfun(\rho)=0,~\Re\,\rho<0,
~|\rho|\le R\}\Bigr|=\frac{|\sL|}{\pi}\,R+O(1).
\]
\end{theorem}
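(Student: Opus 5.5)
We work in the variable $w=1-s$ with $u=\Re\,w>1$, and start from Theorem~\ref{thm:fe} in the form \eqref{eq:hecke}. Since $\Gamma(1-s)\ne0$, the zeros of $\Dfun$ with $\sigma<0$ are exactly the zeros of $\Zfun(w)=\sum_{\om\in\Om}\om^{-w}$ with $u>1$. We isolate the three leading frequencies and write
\[
\Zfun(w)=\omflat^{-w}+\omsharp^{-w}+(-\omsharp)^{-w}+R(w).
\]
For $\om=|\om|\er^{i\theta}$, we have $|\om^{-w}|=|\om|^{-u}\er^{\theta v}$. Assume by symmetry that $v\ge0$ (the case $v<0$ is conjugate). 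Then the term $(-\omsharp)^{-w}$, which has $\theta=-\pi/2$, is smaller than $\omsharp^{-w}$ by the factor $\er^{-\pi v}$.

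The dominant balance is therefore
\[
\omflat^{-w}+\omsharp^{-w}=\omflat^{-w}\bigl(1+\er^{-w\sL}\bigr),
\]
which vanishes exactly when $w\sL=i\pi(2m+1)$. This gives the model zeros $w_m=i\pi(2m+1)/\sL$, equivalently $s=1-i\pi(2m+1)/\sL$, and these lie on the ray $\arg w=\pi/2-\arg\sL=\Thstar$. Under the conjugation convention these give $\overline{\boldsymbol\rho_m}$ or $\boldsymbol\rho_m$.

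The core of the proof is a uniform tail bound, which I take to be the content of the paper's Lemma~\ref{lem:tail}. In a sector around the ray $\arg w=\Thstar$ with $|w|$ large, I need
\[
|R(w)|+|(-\omsharp)^{-w}|\le \er^{-\eta|w|}\,|\omflat^{-w}|
\]
for some $\eta>0$. The contributions to $R$ are of two kinds.
\begin{itemize}
\item The $k=1$ column with $|\ell|\ge2$ is bounded by $\sum_{\ell}|\ell|^{-u}$ against $\omsharp^{-w}$, which is fine since $u\to\infty$ along the ray.
\item The columns $k\ge2$, other than $(k,\ell)=(2,0)$, are the hard part, since by Remark~\ref{rem:order} they converge only after grouping. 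For these I would avoid termwise bounds and instead use the identity $\sum_\ell(2\log k+2\pi i\ell)^{s-1}=\Li_s(k^{-2})/\Gamma(1-s)$. Subtracting the $\ell=0$ term, a residue or contour representation of each column minus its leading term gives a bound of the form $(\log k^2)^{-u}\er^{-c\,\cdot}$ times a factor summable in $k$. Alternatively, I would bound $\Li_s(k^{-2})$ directly for $k\ge3$ via the saddle of $\sum m^{-s}k^{-2m}$: it is of order $\Gamma(1-s)(2\log k)^{s-1}$ plus lower terms, so the $k\ge3$ part is $\ll(\log 9/\log4)^{-u}$ relative to $\omflat^{-w}$.
\end{itemize}
Within the relevant sector $u\asymp v\asymp|w|$, so every one of these savings is $\er^{-\eta|w|}$. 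I expect this tail estimate, uniform in the sector and handling the non-absolutely convergent grouping, to be the main obstacle.

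Given the tail bound, I apply Rouché's theorem to the quotient $F(w)=1+\er^{-w\sL}+\epsilon(w)$, where $\epsilon(w)=O(\er^{-\eta|w|})$, on small discs of radius $r_m\asymp\er^{-\kappa m}$ around each $w_m$. Each model zero is simple, because the derivative of $\er^{-w\sL}$ at $w_m$ equals $\sL$, which is nonzero. Hence each disc contains exactly one zero, and it is simple. This yields \eqref{eq:strings}, with $\kappa$ governed by $\eta$ and the geometry: $|w_m|\asymp\pi(2m+1)/L$ and $u_m=|w_m|\cos\Thstar$, which is the source of the constant $\eta\pi^2/(2L^2)$.

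Away from these discs, in the half-plane $\sigma\le-\tszero$, I must rule out other zeros.
\begin{itemize}
\item Outside a sector around the ray, one of $\omflat^{-w}$ or $\omsharp^{-w}$ strictly dominates the rest by an exponential factor. Below the ray the $\omflat$ term dominates, and above it the $\omsharp$ term dominates. In the latter region the relevant bound is the analogous one relative to $\omsharp^{-w}$, which controls $\Efun$ against $\zeta$ as in \eqref{eq:zetabound}.
\item The bounded part of the strip $-C\le\sigma\le-\tszero$ is handled by Proposition~\ref{prop:middle}. It contributes finitely many zeros, which I absorb into the choice of $\tszero$ and $m_0$.
\end{itemize}

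Finally, the zeros are spaced by $2\pi/L$ along the rays, and each string contributes $L R/(2\pi)+O(1)$ zeros of modulus at most $R$. Counting both conjugate strings, together with the finitely many remaining zeros in $\Re\,\rho<0$, gives the total $(L/\pi)R+O(1)$.
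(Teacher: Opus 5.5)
Your architecture matches the paper's: reduce to $\Zfun$, compare with $\omflat^{-w}+\omsharp^{-w}$, and apply Rouch\'e on exponentially small discs about $w_m=i\pi(2m+1)/\sL$. Once Lemma~\ref{lem:tail} is granted, the Rouch\'e step is sound. The gap is in showing there are no other zeros. You state the tail bound only in a sector about the ray and relative to $|\omflat^{-w}|$. Above the ray you then appeal to controlling $\Efun$ against $\zeta$ ``as in \eqref{eq:zetabound}''. That mechanism bounds $|\Efun(1-w)|$ by $\Efun(1-u)\approx\Gamma(u)\,\omflat^{-u}$ and compares it with $|\zeta(1-w)|\approx|\Gamma(w)|(2\pi)^{-u}\er^{\pi v/2}$, and it proves nothing here. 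The reason is that $\Gamma(u)/|\Gamma(w)|\asymp(\cos\theta)^{u-1/2}\er^{v\theta}$ with $\theta=\arg w$. The resulting bound for $|\Efun|/|\zeta|$ is therefore about $\exp\{u(\log\cos\theta+\Re\,\sL-(\tfrac{\pi}{2}-\theta)\tan\theta)\}$. This is roughly $\er^{0.41u}$ on the ray and still exceeds $1$ throughout the cone $\Thstar<\arg w\le1$.

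What you need is Lemma~\ref{lem:tail} in the form the paper states: error $\ll(|\omflat^{-w}|+|\omsharp^{-w}|)\er^{-\eta u}$, uniformly for $u\ge2$, $v\ge0$. The paper combines the resulting \eqref{eq:twoterm} with the lower bound \eqref{eq:offdisk}, namely $|1+\er^{-z}|\ge c\min\{1,d(z)\}(1+\er^{-\Re z})$ for $z=w\sL$, where $d(z)$ is the distance to the points $i\pi(2k+1)$. That single estimate also covers a region you never treat: points inside your sector but outside the discs. This includes points just outside the disc about $w_m$, where $|1+\er^{-z}|$ is only of size $Lr_m$ and must still beat the error (Case~2 of the paper's proof).

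Your sketch of the tail bound would also not work as written. Splitting each column into its $\ell=0$ term plus a remainder and summing over $k$ fails, because $\sum_k(2\log k)^{-u}$ diverges (Remark~\ref{rem:order}). Bounding $\Li_{1-w}(k^{-2})$ by $\Li_{1-u}(k^{-2})$ before dividing by $\Gamma(w)$ incurs the same loss $\Gamma(u)/|\Gamma(w)|$. So the claimed relative saving $(\log9/\log4)^{-u}$ for $k\ge3$ is false: on the ray, the single term $\om_{3,1}^{-w}$ already has size about $\er^{-0.38u}|\omflat^{-w}|$. The paper instead expands only the columns $j\le30$, termwise by Lipschitz. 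It bounds the leftover coefficients $\sum_{j\ge31}j^{-2m}$ crudely against $|\omsharp^{-w}|$, and the loss $\er^{v\arg w}\le\er^{\pi v/2}$ is absorbed because $31>\er^{\pi}$.

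Finally, the count needs finitely many zeros in $-\tszero\le\sigma<0$, which you assert without proof. Proposition~\ref{prop:middle} covers only $\sigma\le-\delta$, and its lower bound $\zeta(2u)/\zeta(u)$ degenerates as $u\to1^{+}$. The sliver $-\delta<\sigma<0$ requires a lower bound for $|\zeta(1-s)|$ near $\Re(1-s)=1$, such as $1/\zeta(u+it)\ll(\log|t|)^{7}$ for $u\ge1$.
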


In the half-plane $\sigma<0$, Theorem~\ref{thm:fe} takes the form
\be\label{eq:reduction}
\Dfun(s)=\Gamma(w)\,\Zfun(w)
\qquad\text{with}\quad
\Zfun(w)\defeq\sum_{\om\in\Om}\om^{-w},
\ee
the series grouped by $k$ as in Remark~\ref{rem:order}. Since
$\Gamma(w)\ne0$, the zeros of $\Dfun$ with $\sigma<0$ are precisely
the zeros of $\Zfun$ with $u>1$. The coefficients of $\Dfun$ are
real, so the zero set is symmetric under conjugation and it suffices
to work in the quadrant $u>0$, $v\ge0$. Three lemmas carry the
argument. Lemma~\ref{lem:dominance} isolates $\omflat$ and $\omsharp$
as the only two frequencies in $\Om$ that can dominate $\Zfun(w)$ as
$u\to\infty$, Lemma~\ref{lem:g1} evaluates the $k=1$ column exactly,
and Lemma~\ref{lem:tail} reduces the remaining columns $k\ge2$ to the
single term $\omflat^{-w}$. Figure~\ref{fig:cones} displays the
resulting geometry.

\begin{figure}[ht]
\centering
\includegraphics[width=3.5truein]{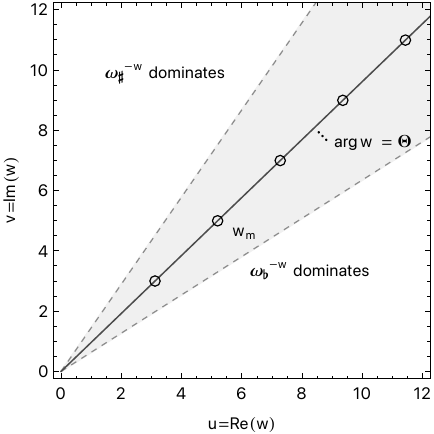}
\caption{The two dominance regions for $\Zfun(w)$ in the quadrant
$u>0$, $v\ge0$. Below the $\varepsilon$-cone about the ray
$\arg w=\Thstar$, the term $\omflat^{-w}$ dominates. Above the cone,
the term $\omsharp^{-w}$ dominates. The circles are the points
$w_{m}$ defined in the proof of Theorem~\ref{thm:strings}. They lie
on the ray, where neither term dominates. The zeros
${\boldsymbol\rho}_{m}$ of \eqref{eq:strings} correspond to these
points under $s=1-w$.}
\label{fig:cones}
\end{figure}

For every $\om\in\Om$, we have
$\bigl|\om^{-w}\bigr|=\exp\bigl\{-\Re(w\log\om)\bigr\}$, so the terms
of $\Zfun(w)$ that dominate as $u\to\infty$ correspond to the
minimizers of $\Re(w\log\om)$. The first lemma identifies these
minimizers in the two families $\{\ell=0\}$ and $\{\ell\ge1\}$.

\begin{lemma}\label{lem:dominance}
In the notation of \eqref{eq:freq}, we have
$\om_{k,\ell}=2\log k+2\pi i\ell$ for $(k,\ell)\ne(1,0)$.
For every $w=u+iv$ with $u>0$ and $v\ge0$,
\be\label{eq:dom1}
\Re\bigl(w\log\om_{k,0}\bigr)=u\log(2\log k)\ge u\log\omflat\qquad(k\ge2),
\ee
with equality if and only if $k=2$. For all $(k,\ell)$ with $\ell\ge 1$,
\be\label{eq:dom2}
\Re\bigl(w\log\om_{k,\ell}\bigr)-\Re\bigl(w\log\omsharp\bigr)
=u\log\frac{|\om_{k,\ell}|}{2\pi}
+v\Bigl\{\frac{\pi}{2}-\arg\om_{k,\ell}\Bigr\}
\ge u\log\ell\ge0,
\ee
where equality holds in the first inequality if and only if $k=1$,
and in the second if and only if $\ell=1$; in particular, the
left-hand side of \eqref{eq:dom2} vanishes if and only if
$\om_{k,\ell}=\omsharp$. Consequently, $\omflat$ and $\omsharp$ are the
unique minimizers of $\Re(w\log\om_{k,\ell})$ over $\{\ell=0\}$ and
$\{\ell\ge1\}$, respectively, for every such $w$.
\end{lemma}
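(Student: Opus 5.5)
The plan is to compute $\Re(w\log\om)$ directly from the principal branch, writing $\log\om=\log|\om|+i\arg\om$. Then
\[
\Re(w\log\om)=u\log|\om|-v\arg\om
\]
for $w=u+iv$, and everything reduces to comparing moduli and arguments of the frequencies.

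First, the family $\ell=0$. For $k\ge2$ the frequency $\om_{k,0}=2\log k$ is a positive real, so $\arg\om_{k,0}=0$ and $\Re(w\log\om_{k,0})=u\log(2\log k)$. Since $u>0$ and $k\mapsto\log(2\log k)$ is strictly increasing on $k\ge2$, we get \eqref{eq:dom1}, with equality exactly at $k=2$, where $\om_{2,0}=\omflat$.

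Next, the family $\ell\ge1$. Here $\omsharp=2\pi i$ has $\arg\omsharp=\pi/2$ and $|\omsharp|=2\pi$, so subtracting the two real parts gives the identity in \eqref{eq:dom2} at once. For the inequality I bound the two terms separately.
\begin{itemize}
\item Since $|\om_{k,\ell}|=\sqrt{(2\log k)^2+(2\pi\ell)^2}\ge2\pi\ell$, with equality iff $\log k=0$, i.e.\ $k=1$, the first term is at least $u\log\ell$ because $u>0$.
\item For $k\ge1$, $\ell\ge1$, the point $\om_{k,\ell}$ lies in the closed first quadrant with positive imaginary part, so $\arg\om_{k,\ell}\in(0,\pi/2]$. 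Hence $\pi/2-\arg\om_{k,\ell}\ge0$, and since $v\ge0$ the second term is nonnegative.
\end{itemize}
This gives the first inequality, and $u\log\ell\ge0$ with equality iff $\ell=1$.

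The equality case of the first inequality needs care: when $v>0$ it requires both $k=1$ and $\arg=\pi/2$ (the latter again forcing $k=1$). When $v=0$, the first bound alone already forces $k=1$, so in either case equality holds iff $k=1$. The second inequality is an equality iff $\ell=1$. Hence the left side of \eqref{eq:dom2} vanishes iff $(k,\ell)=(1,1)$, i.e.\ $\om_{k,\ell}=\omsharp$.

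The final sentence of the lemma follows by combining the two strict-equality characterizations: $\omflat$ is the unique minimizer over $\{\ell=0\}$ and $\omsharp$ the unique minimizer over $\{\ell\ge1\}$. The column $k=1$, $\ell=0$ is excluded from $\Om$, so it does not enter. The only step needing attention is this bookkeeping of equality cases when $v=0$; the rest is direct computation.
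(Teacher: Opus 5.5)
Your proposal is correct and follows essentially the same route as the paper: both rest on $\Re(w\log\om)=u\log|\om|-v\arg\om$ together with $|\om_{k,\ell}|\ge2\pi\ell$ and $0\le\arg\om_{k,\ell}\le\pi/2$. Your explicit treatment of the equality cases, including $v=0$, spells out what the paper compresses into ``the remaining assertions follow directly.''
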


\begin{proof}
The bound \eqref{eq:dom1} is immediate, since $\om_{k,0}=2\log k$ is
real and increasing with $k$.
For \eqref{eq:dom2}, the equality derives
from $\Re(w\log\om)=u\log|\om|-v\arg\om$; the first inequality
then follows because $|\om_{k,\ell}|\ge2\pi\ell$
and $\arg\om_{k,\ell}\in[0,\pi/2]$. The remaining assertions
follow directly, with equality throughout only when $k=1$.
\end{proof}

The next lemma computes the $k=1$ column of $\Zfun(w)$ exactly. The
identity restates Riemann's functional equation \eqref{eq:rfe} and
exhibits $\omsharp$ as the frequency contributed by $\zeta$, as
recorded in \eqref{eq:omsharp}.

\begin{lemma}\label{lem:g1}
For $u\ge2$ and $v\ge0$,
\[
\sum_{\ell\ne0}(2\pi i\ell)^{-w}
=\bigl\{\omsharp^{-w}+(-\omsharp)^{-w}\bigr\}\cdot\zeta(w)
=\omsharp^{-w}
+O\bigl(2^{-u}\bigl|\omsharp^{-w}\bigr|+(2\pi)^{-u}\bigr).
\]
\end{lemma}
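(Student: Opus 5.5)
We split the sum over $\ell\ne0$ into the ranges $\ell\ge1$ and $\ell\le-1$ and evaluate each with the principal branch. For $\ell\ge1$ we have $2\pi i\ell=\ell\cdot\omsharp$ with $\arg\omsharp=\pi/2$, so $\log(2\pi i\ell)=\log\ell+\log\omsharp$ exactly. This gives $(2\pi i\ell)^{-w}=\ell^{-w}\omsharp^{-w}$. Likewise, for $\ell\le-1$ we have $2\pi i\ell=|\ell|\cdot(-\omsharp)$ with $\arg(-\omsharp)=-\pi/2$, so $(2\pi i\ell)^{-w}=|\ell|^{-w}(-\omsharp)^{-w}$. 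Summing over $|\ell|\ge1$ is legitimate, since the series $\zeta(w)$ converges absolutely for $u\ge2$. This yields the first identity $\{\omsharp^{-w}+(-\omsharp)^{-w}\}\zeta(w)$. It is consistent with \eqref{eq:rfe}, since substituting $s=1-w$ recovers $\cX(s)\zeta(1-s)/\Gamma(1-s)$, though we do not need that.

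For the asymptotic we expand
\[
\bigl\{\omsharp^{-w}+(-\omsharp)^{-w}\bigr\}\zeta(w)
=\omsharp^{-w}+\omsharp^{-w}\bigl(\zeta(w)-1\bigr)+(-\omsharp)^{-w}\zeta(w).
\]
For the middle term we use $|\zeta(w)-1|\le\sum_{n\ge2}n^{-u}\le 2^{-u}+\int_2^\infty x^{-u}\dd x\le 2^{-u}(1+2/(u-1))\le 3\cdot 2^{-u}$ for $u\ge2$. This gives the $O(2^{-u}|\omsharp^{-w}|)$ error.

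For the last term we compute $|(-\omsharp)^{-w}|=\exp\{-u\log(2\pi)+v\arg(-\omsharp)\}=(2\pi)^{-u}\er^{-\pi v/2}$. Since $v\ge0$, this is at most $(2\pi)^{-u}$. Combined with $|\zeta(w)|\le\zeta(2)$ for $u\ge2$, the last term is $O((2\pi)^{-u})$. This is the one place where the hypothesis $v\ge0$ enters: the conjugate frequency $-\omsharp$ is suppressed by the factor $\er^{-\pi v/2}$ in the upper half-plane. It is also why the second error term is not measured relative to $|\omsharp^{-w}|=(2\pi)^{-u}\er^{\pi v/2}$.

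No step is a real obstacle. The only point requiring care is the branch bookkeeping, namely that $\log(\ell\cdot(\pm\omsharp))=\log\ell+\log(\pm\omsharp)$ holds without a $2\pi i$ correction. This holds because $\ell>0$ is real and $\pm\omsharp$ has principal argument $\pm\pi/2$.
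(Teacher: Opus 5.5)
Your proof is correct and matches the paper's argument for the error terms: you split off $\omsharp^{-w}(\zeta(w)-1)$ and $(-\omsharp)^{-w}\zeta(w)$, and use $v\ge0$ only to discard the factor $\er^{-\pi v/2}$. The one difference is cosmetic: you obtain the identity by factoring $(2\pi i\ell)^{-w}=|\ell|^{-w}(\pm\omsharp)^{-w}$ directly on the principal branch, whereas the paper reads it off from \eqref{eq:rfe} with $s=1-w$ and the explicit form of $\cX(1-w)$ after canceling $\Gamma(w)$, which is the same algebraic computation written differently.
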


\begin{proof}
By \eqref{eq:rfe} with $s=1-w$, we have
\[
\zeta(1-w)=\Gamma(w)\sum_{\ell\ne0}(2\pi i \ell)^{-w}
=\cX(1-w)\,\zeta(w).
\]
Since
\[
\cX(1-w)=2^{1-w}\pi^{-w}\Gamma(w)\cos(\pi w/2),
\]
the factor $\Gamma(w)$ cancels, and we get
\[
\sum_{\ell\ne0}(2\pi i \ell)^{-w}
=(2\pi)^{-w}\cdot 2\cos\Bigl(\frac{\pi w}{2}\Bigr)\cdot\zeta(w)
=\bigl\{\omsharp^{-w}+(-\omsharp)^{-w}\bigr\}\zeta(w),
\]
where we used the principal-branch identities
\[
(2\pi i)^{-w}=(2\pi)^{-w}\er^{-i\pi w/2}
\mand
(-2\pi i)^{-w}=(2\pi)^{-w}\er^{i\pi w/2}.
\]

The difference between the exact expression and $\omsharp^{-w}$ equals
\[
\omsharp^{-w}\bigl(\zeta(w)-1\bigr)+(-\omsharp)^{-w}\zeta(w).
\]
For $u\ge2$ and $v\ge0$, it is readily seen that
\[
|\zeta(w)-1|\le 2^{2-u}
\mand
\bigl|(-\omsharp)^{-w}\bigr|=(2\pi)^{-u}\er^{-\pi v/2}\le(2\pi)^{-u},
\]
and the first inequality implies $|\zeta(w)|\le1+2^{2-u}\le2$.
The stated bound follows.
\end{proof}

The last lemma controls the columns $k\ge2$ of $\Zfun(w)$, which
carry the contribution of the entire part $\Efun$, and extracts
$\omflat$ as their leading frequency, in agreement with
\eqref{eq:omflat}. The error is measured against the sum
$|\omflat^{-w}|+|\omsharp^{-w}|$ of the two main terms, since
either may dominate the other according to the position of $w$
relative to the ray $\arg w=\Thstar$.

\begin{lemma}\label{lem:tail}
There is an absolute constant $\eta>0$ such that, uniformly for
$w=u+iv$ with $u\ge2$ and $v\ge0$,
\be\label{eq:clippers}
\sum_{k\ge2}\,\sum_{\ell\in\Z}(2\log k+2\pi i\ell)^{-w}
=\omflat^{-w}
+O\Bigl\{\bigl(\bigl|\omflat^{-w}\bigr|
+\bigl|\omsharp^{-w}\bigr|\bigr)\er^{-\eta u}\Bigr\}.
\ee
\end{lemma}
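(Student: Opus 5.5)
The plan is to treat the columns $k\ge2$ one at a time, separating out the column $k=2$, which carries both the main term and the constant $\eta$, from the columns $k\ge3$, which I expect to be lower order by a fixed geometric factor.

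\emph{The column $k=2$.} The term $\ell=0$ is exactly $\omflat^{-w}$, so only the terms $\ell\ne0$ need bounding.

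For $\ell\ge1$, I would use the identity \eqref{eq:dom2} of Lemma~\ref{lem:dominance}, but keep the exact size $|\om_{2,\ell}|$ rather than the cruder bound $2\pi\ell$. This gives
\[
\bigl|\om_{2,\ell}^{-w}\bigr|\le\bigl|\omsharp^{-w}\bigr|\Bigl(\frac{|\om_{2,\ell}|}{2\pi}\Bigr)^{-u}.
\]
For $\ell=1$ we have $|\om_{2,1}|/2\pi=(1+(\log2/\pi)^2)^{1/2}$. This suggests the choice
\[
\eta=\tfrac12\log\bigl(1+(\log2/\pi)^2\bigr)=0.02376\cdots,
\]
which agrees with the value quoted in the summary. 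For $\ell\ge2$ the factor is at most $\ell^{-u}$, and since $u\ge2$ the sum over $\ell\ge2$ is $\ll2^{-u}$, which is far smaller than $\er^{-\eta u}$.

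For $\ell\le-1$, we have $\arg\om_{2,\ell}\in[-\pi/2,0)$ and $v\ge0$. Hence $|\om_{2,\ell}^{-w}|\le|\om_{2,\ell}|^{-u}\le(2\pi|\ell|)^{-u}$. Comparing with $|\omflat^{-w}|=(2\log2)^{-u}$ gives a relative factor $(\log2/\pi)^u$, which is much smaller than $\er^{-\eta u}$.

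\emph{The columns $k\ge3$.} Here termwise bounds are useless, because Remark~\ref{rem:order} shows that the sum over $k$ of the termwise bounds diverges. Instead I would use the closed form
\[
\sum_{\ell}(2\log k+2\pi i\ell)^{-w}=\frac{\Li_{1-w}(k^{-2})}{\Gamma(w)},
\]
which is \eqref{eq:lipschitz} with $\lambda=2\log k$. The numerator is bounded by $|\Li_{1-w}(k^{-2})|\le\Li_{1-u}(k^{-2})=\sum_m m^{u-1}k^{-2m}$. Comparing this sum with an integral, as in the proof of Proposition~\ref{prop:real}, gives
\[
\Li_{1-u}(k^{-2})\ll\frac{\Gamma(u)}{(2\log k)^{u}}+\max_{m\ge1}m^{u-1}k^{-2m}.
\]
After summing over $k\ge3$, the first term becomes $\Gamma(u)(2\log3)^{-u}$ up to harmless factors; the divergence seen in Remark~\ref{rem:order} disappears because each column is now $\ll k^{-2}$ in the $k$-aspect. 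It then remains to divide by $|\Gamma(w)|$. For this I would use the product formula
\[
\frac{\Gamma(u)}{|\Gamma(u+iv)|}=\prod_{n\ge0}\Bigl(1+\frac{v^2}{(u+n)^2}\Bigr)^{1/2}.
\]
The aim is to show this ratio is at most $\min\{\er^{\pi v/2}(u/2\pi)^{u}\cdot(\text{poly}),\ \er^{O(v^2/u)}\}$, in the form needed to bound it by
\[
\bigl((2\log2)^{-u}+(2\pi)^{-u}\er^{\pi v/2}\bigr)\,(2\log3)^{u}\,(\text{gain}).
\]

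\emph{Expected obstacle.} The main difficulty is this last, uniform-in-$v$ comparison of $\Gamma(u)(2\log3)^{-u}/|\Gamma(w)|$ with $|\omflat^{-w}|+|\omsharp^{-w}|$. I would split into two ranges.
\begin{itemize}
\item For $v\le\epsilon u$, the ratio $\Gamma(u)/|\Gamma(w)|$ is $\er^{O(\epsilon^2 u)}$, and the saving $(\log2/\log3)^u$ absorbs it.
\item For $v\ge\epsilon u$, Stirling gives $\Gamma(u)/|\Gamma(w)|\approx\er^{\pi v/2}\cdot\er^{-v\arctan(u/v)+\dots}$, and one must check that the combined exponent stays below that of $(2\pi)^{-u}\er^{\pi v/2}$ by a margin $\gg u$.
\end{itemize}
If the direct $\Gamma$ comparison proves too lossy, the fallback is to treat columns $3\le k\le K$ termwise via Lemma~\ref{lem:dominance}, since finitely many columns are harmless. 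Only for $k>K$, where $2\log k$ is large enough to make every factor comfortably small, would I use the polylogarithm bound.
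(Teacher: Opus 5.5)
Your treatment of the column $k=2$ is the same as the paper's: the terms $\ell=1$, $\ell\ge2$ and $\ell\le-1$ give the savings $\log(|\om_{2,1}|/2\pi)$, $\log2$ and $A=\log(\pi/\log2)$. Your fallback is exactly the paper's proof. The paper treats the columns $2\le j\le30$ termwise and applies the polylogarithm bound only to the tail $j\ge31$. The reason is $31>\er^{\pi}$: then $2\log31>2\pi$, so the crude Stirling consequence $\Gamma(u)/|\Gamma(w)|\ll\er^{v\arg w}\le\er^{\pi v/2}$ already gives a saving $(\pi/\log31)^{u}$ against $|\omsharp^{-w}|$. So in your fallback, ``$2\log k$ large enough'' must mean $\log K>\pi$. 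Your main route, which starts the polylogarithm block at $k=3$, is a genuinely different and workable alternative. It replaces the columns $3\le j\le30$ and the tail by a single block, using $\sum_{k\ge3}k^{-2m}\le4\cdot9^{-m}$ as the analogue of $r_m\le32\cdot31^{-2m}$. The price is that $2\log3<2\pi$, so the crude bound $\er^{\pi v/2}$ loses $(\pi/\log3)^{u}$ and you need a sharp Gamma ratio.

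However, the two-range split you sketch does not close. In the first range, the estimate $\er^{O(\epsilon^{2}u)}$ (really $\er^{v\theta}\le\er^{\epsilon^2u}$) beats $(\log2/\log3)^{u}$ only when $\epsilon^{2}<\log(\log3/\log2)$, that is, for $v<0.68\,u$. In the second range, comparing your bound with $|\omsharp^{-w}|$ alone works only for roughly $v>0.90\,u$. Indeed, for $v<0.67\,u$ even the single term $(2\log3)^{-w}$ exceeds $\omsharp^{-w}$ in modulus. The remedy is to keep the exact exponent on both sides. Comparing your product formula with an integral gives $\Gamma(u)/|\Gamma(w)|\le\er^{v\theta}(\cos\theta)^{u-1}$, where $\theta=\arg w$. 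Hence the ratios of the $k\ge3$ block to $\omflat^{-u}$ and to $|\omsharp^{-w}|$ are $\ll(\cos\theta)^{-1}\exp(u\,b_1(\theta))$ and $\ll(\cos\theta)^{-1}\exp(u\,b_2(\theta))$, with $b_1(\theta)=\log\cos\theta+\theta\tan\theta-\log(\log3/\log2)$ increasing and $b_2(\theta)=\log(\pi/\log3)+\log\cos\theta-(\pi/2-\theta)\tan\theta$ decreasing on $[0,\pi/2)$. Their difference is $\tfrac{\pi}{2}\tan\theta-A$, so they cross exactly on the ray $\theta=\Thstar$, where both equal $-0.0511\cdots$. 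The factor $(\cos\theta)^{-1}$ is harmless: it is bounded for $\theta\le\Thstar$ and absorbed by $b_2$ for $\theta\ge\Thstar$. Splitting at $\arg w=\Thstar$ therefore saves $\er^{-0.051u}$, which beats the $k=2$ bottleneck $0.0238$, and you recover the paper's $\eta$. The margin is thin, though, and any cruder estimate in either range, such as $\theta\tan\theta\le\tan^{2}\theta$ or dropping $\log\cos\theta$, destroys it.
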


\begin{proof}
We write $\Sigma(w)$ for the sum on the left side of \eqref{eq:clippers},
and we set
\[
W\defeq\bigl|\omflat^{-w}\bigr|+\bigl|\omsharp^{-w}\bigr|
=\omflat^{-u}+(2\pi)^{-u}\er^{\pi v/2}.
\]
By \eqref{eq:split}, \eqref{eq:rfe}, and \eqref{eq:fe2}, we have
$\Sigma(w)=\Efun(1-w)/\Gamma(w)$, where
\[
\Efun(1-w)=\sum_{m\in\N}\bigl(\zeta(2m)-1\bigr)m^{w-1}
\]
converges absolutely for every $w\in\C$. Writing
\[
\zeta(2m)-1=\sum_{j=2}^{30}j^{-2m}+r_{m},\qquad
r_{m}\defeq\sum_{j\ge31}j^{-2m},
\]
we obtain
\[
\Efun(1-w)=\sum_{j=2}^{30}\Li_{1-w}(j^{-2})+R(w),
\qquad
R(w)\defeq\sum_{m\in\N}r_{m}\,m^{w-1}.
\]

First, we bound $R$. Comparison with $\int_{31}^{\infty}t^{-2m}\dd t$
gives $r_{m}\le32\cdot31^{-2m}$, and therefore
\[
|R(w)|\le32\Li_{1-u}\bigl(31^{-2}\bigr).
\]
For $0<x<1$ and $\nu\defeq\log(1/x)$,
the function $t\mapsto t^{u-1}x^{t}$ is unimodal on $(0,\infty)$; hence
\[
\Li_{1-u}(x)\le\int_{0}^{\infty}t^{u-1}x^{t}\dd t
+\max_{t>0}\,t^{u-1}x^{t}
=\Gamma(u)\nu^{-u}+\Bigl(\frac{u-1}{\er\nu}\Bigr)^{u-1}
\le(1+\nu)\,\Gamma(u)\nu^{-u},
\]
where the last step uses $((u-1)/\er)^{u-1}\le\Gamma(u)$.
Taking $x=31^{-2}$, so $\nu=2\log31$, we get
$|R(w)|\ll\Gamma(u)(2\log31)^{-u}$. In the opposite direction,
Stirling's formula \cite[\S5.11]{DLMF} gives the uniform estimate
\[
\log|\Gamma(w)|
=(u-\tfrac12)\log|w|-v\arg w-u+\tfrac12\log2\pi+O(|w|^{-1})
\qquad(0\le\arg w\le\tfrac{\pi}{2},~|w|\ge2)
\]
and since $|w|\ge u$, it follows that
$\Gamma(u)/|\Gamma(w)|\ll\er^{\,v\arg w}$. Consequently,
\[
\frac{|R(w)|}{|\Gamma(w)|}
\ll(2\log31)^{-u}\,\er^{\,v\arg w}
\le(2\log31)^{-u}\,\er^{\pi v/2}
=\bigl|\omsharp^{-w}\bigr|
\exp\Bigl(-u\log\frac{\log31}{\pi}\Bigr).
\]

Next, fix any integer $j\in[2,30]$. Since $\Re(1-w)<0$, the Lipschitz formula
\eqref{eq:lipschitz} with $\lambda=2\log j$ shows
\[
\frac{\Li_{1-w}(j^{-2})}{\Gamma(w)}
=\sum_{\ell\in\Z}\om_{j,\ell}^{-w}
=(2\log j)^{-w}
+\sum_{\ell\le-1}\om_{j,\ell}^{-w}
+\sum_{\ell\ge1}\om_{j,\ell}^{-w}.
\]
For $\ell\le-1$, we have $\arg\om_{j,\ell}<0$ and
$|\om_{j,\ell}|\ge2\pi|\ell|$, therefore
\[
\sum_{\ell\le-1}\bigl|\om_{j,\ell}^{-w}\bigr|
\le(2\pi)^{-u}\zeta(u)\le2(2\pi)^{-u}
=2\,\omflat^{-u}\exp(-u\,\Re(\sL)),
\]
since $\log(2\pi/\omflat)=\Re(\sL)$.
For $\ell\ge1$, exponentiating the equality in \eqref{eq:dom2} and
discarding the nonnegative term
$v\bigl\{\tfrac{\pi}{2}-\arg\om_{j,\ell}\bigr\}$ yields
\[
\bigl|\om_{j,\ell}^{-w}\bigr|
\le\bigl|\omsharp^{-w}\bigr|
\exp\Bigl(-u\log\frac{|\om_{j,\ell}|}{2\pi}\Bigr).
\]
For $\ell=1$, the modulus $|\om_{j,1}|$ increases with $j$, and
$|\om_{2,1}|^{2}=\omflat^{2}+4\pi^{2}>(2\pi)^{2}$, so
\[
\log\frac{|\om_{j,1}|}{2\pi}
\ge\log\frac{|\om_{2,1}|}{2\pi}>0
\qquad(j\ge2).
\]
For $\ell\ge2$, the bound $|\om_{j,\ell}|\ge2\pi\ell$ gives
\[
\sum_{\ell\ge2}\bigl|\om_{j,\ell}^{-w}\bigr|
\le\bigl|\omsharp^{-w}\bigr|\sum_{\ell\ge2}\ell^{-u}
\le3\,\bigl|\omsharp^{-w}\bigr|\,2^{-u}.
\]
Hence
\[
\sum_{\ell\ge1}\bigl|\om_{j,\ell}^{-w}\bigr|
\le\bigl|\omsharp^{-w}\bigr|
\biggl(\exp\Bigl(-u\log\frac{|\om_{2,1}|}{2\pi}\Bigr)
+3\cdot 2^{-u}\biggr).
\]

Finally, for $3\le j\le30$, the main terms obtained above satisfy
\[
\bigl|(2\log j)^{-w}\bigr|=(2\log j)^{-u}\le(2\log3)^{-u}
=\omflat^{-u}\exp\Bigl(-u\log\frac{\log3}{\log2}\Bigr).
\]
Collecting the bound on $R$ and the twenty-nine columns, we conclude that
the estimate
\[
\frac{\Efun(1-w)}{\Gamma(w)}
=\omflat^{-w}+O\bigl(W\er^{-\eta u}\bigr)
\]
holds with the constant
\[
\eta\defeq\min\biggl\{\log\frac{\log31}{\pi},\,A,\,
\log\frac{|\om_{2,1}|}{2\pi},\,\log2,\,
\log\frac{\log3}{\log2}\biggr\}.
\]
As each entry of the minimum is positive, this proves the lemma.
\end{proof}

\begin{remark}\label{rem:eta}
Numerically, the minimum in the proof is its third entry, so the
argument establishes \eqref{eq:clippers} with
\[
\eta=\log\frac{|\om_{2,1}|}{2\pi}
=\frac12\log\Bigl(1+\frac{(\log 2)^2}{\pi^{2}}\Bigr)
=0.0237661\cdots,
\]
and with no larger exponent, since the estimate applied to the term
$\om_{2,1}^{-w}$ holds with equality at $v=0$.
This barrier, however, is an artifact of the proof. Retaining the
discarded factor
$\exp\bigl(-v\bigl\{\tfrac{\pi}{2}-\arg\om_{2,1}\bigr\}\bigr)$
raises the admissible exponent, in the sector about the ray
$\arg w=\Thstar$ relevant to Theorem~\ref{thm:strings}, toward
\[
\log\frac{|\om_{2,1}|}{2\pi}
+\frac{2\,\Re(\sL)}{\pi}\arctan\frac{\log2}{\pi}
=0.2326895\cdots,
\]
and no admissible exponent on the ray can exceed this value, since
the single term $\om_{2,1}^{-w}$ attains it there; we do not pursue
the refinement.
\end{remark}

\bigskip

\begin{proof}[Proof of Theorem~\ref{thm:strings}]
\vphantom{X}

\bigskip\noindent{\bf Step 1: Reduction.}
We recall that the zeros of $\Dfun$ with $\sigma<0$ are precisely the
zeros of $\Zfun$ with $u>1$, by \eqref{eq:reduction}; by
conjugation symmetry, it suffices to treat the case $v\ge0$.

\bigskip\noindent{\bf Step 2: Two-term reduction.}
By Lemmas~\ref{lem:g1} and~\ref{lem:tail},
\be\label{eq:twoterm}
\Zfun(w)=\omflat^{-w}+\omsharp^{-w}
+O\Bigl\{\bigl(\bigl|\omflat^{-w}\bigr|+\bigl|\omsharp^{-w}\bigr|\bigr)
\er^{-\eta u}\Bigr\}
\qquad(u\ge2,~v\ge0),
\ee
where $\eta>0$ is the constant of Lemma~\ref{lem:tail}. Indeed, the
error term of Lemma~\ref{lem:g1} equals
$\bigl|\omsharp^{-w}\bigr|2^{-u}+\bigl|\omflat^{-w}\bigr|\er^{-A\,u}$,
and, decreasing $\eta$ if necessary, we may assume
$\eta\le\min\{\log2,\,A\}$.
Since $\omsharp=\omflat\,\er^{\sL}$ by the definition of $\sL$, we may write
\be\label{eq:Zfunest1}
\Zfun(w)=\omflat^{-w}\Bigl\{1+\er^{-w\sL}
+O\Bigl(\bigl(1+\bigl|\er^{-w\sL}\bigr|\bigr)\er^{-\eta u}\Bigr)\Bigr\}.
\ee
Using the polar form $\sL=L\,\er^{i(\pi/2-\Thstar)}$ of
\eqref{eq:omom}, and writing $w=r\er^{i\theta}$, we have
\[
\Re\bigl(w\sL\bigr)=uA-\tfrac{\pi}{2}v=
L\,r\sin(\Thstar-\theta).
\]
Thus, for $w$ in the quadrant satisfying $|\theta-\Thstar|\ge\varepsilon$,
\[
\bigl|\er^{-w\sL}\bigr|=\er^{-uA+v\pi/2}
\begin{cases}
\le\,\er^{-L\,r\sin\varepsilon}
&\text{if $\theta\le\Thstar-\varepsilon$,}\\[2pt]
\ge\,\er^{L\,r\sin\varepsilon}
&\text{if $\theta\ge\Thstar+\varepsilon$.}
\end{cases}
\]
In either case,
once $L\,r\sin\varepsilon\ge\log2$, the modulus is bounded away
from one by a factor of at least two, and consequently
\[
\bigl|1+\er^{-w\sL}\bigr|\ge\tfrac13\bigl(1+\er^{-uA+v\pi/2}\bigr).
\]
Since $r=|w|\ge u$, this threshold and the domination of the error term
in \eqref{eq:Zfunest1} both hold for all $u$ large enough.
Hence, for every
$\varepsilon>0$ there is $U(\varepsilon)$ such that
$\Zfun(w)\ne0$ for $u\ge U(\varepsilon)$ outside the cone
$|\arg w-\Thstar|<\varepsilon$ (see Figure~\ref{fig:cones}). This proves
that $\Dfun$ is nonvanishing there and, together with conjugation,
confines every zero with $\sigma\le-U(\varepsilon)$ to the two conjugate
$\varepsilon$-cones about the rays $\arg w=\pm\Thstar$.
The confinement is mentioned only to orient the reader; the analysis
of Step~3 locates the zeros exactly and does not depend on it.

\bigskip\noindent{\bf Step 3: Location of the zeros.}
We denote
\[
z\defeq w\sL=x+iy,
\qquad
x\defeq\Re\,z=uA-\tfrac{\pi}{2}v,
\qquad
y\defeq\Im\,z=\tfrac{\pi}{2}u+Av.
\]
We define $g(z)\defeq1+\er^{-z}$; then \eqref{eq:Zfunest1} takes the
form
\be\label{eq:ZfunRouche}
\Zfun(w)\cdot\omflat^w=g(z)+h(z),
\ee
where the function $h$ so defined satisfies
\be\label{eq:ZfunRouche2}
h(z)\ll(1+\er^{-x})\er^{-\eta u}
\qquad(u\ge2,~v\ge0).
\ee
The zeros of $g$ are simple and occur at the points $z=i\pi(2k+1)$
with $k\in\Z$, or in $w$-coordinates, at
\[
w_{k}\defeq\frac{i\pi(2k+1)}{\sL}\qquad(k\in\Z).
\]
For $k=m\ge0$, the polar form \eqref{eq:omom} gives
$w_{m}=\pi(2m+1)L^{-1}\er^{i\Thstar}$, so these points are equally
spaced by $2\pi/L$ along the ray $\arg w=\Thstar$, with real parts
\[
u_{m}\defeq\Re\,w_{m}=\pi^{2}(2m+1)/(2L^{2}).
\]
As $z-i\pi(2k+1)=\sL(w-w_{k})$, the distance from $z$ to the nearest zero of $g$,
defined by
\[
d(z)\defeq\min_{k\in\Z}\bigl|z-i\pi(2k+1)\bigr|,
\]
satisfies
\[
d(z)=L\,\min_{k\in\Z}|w-w_{k}|.
\]

We claim that there is an absolute constant $c>0$ such that
\be\label{eq:offdisk}
\bigl|g(z)\bigr|\ge c\,\min\{1,\,d(z)\}\,\bigl(1+\er^{-x}\bigr)
\qquad(z\in\C).
\ee
For $|x|\ge\log2$, we have
\dalign{
|g(z)|&\ge1-\er^{-x}\ge\tfrac12\ge\tfrac13\bigl(1+\er^{-x}\bigr)
&&\qquad(x\ge\log2),\\
|g(z)|&\ge\er^{-x}-1\ge\tfrac12\,\er^{-x}\ge\tfrac13\bigl(1+\er^{-x}\bigr)
&&\qquad(x\le-\log2),
}
and \eqref{eq:offdisk} follows in this range since
$\min\{1,\,d(z)\}\le1$. For $|x|\le\log2$, we have $1+\er^{-x}\le3$,
and taking into account the $2\pi i$-periodicity of $g$,
we may assume $0\le y\le2\pi$,
in which case $d(z)=|z-i\pi|$. The quotient $g(z)/(z-i\pi)$ extends
to a zero-free analytic function on the compact rectangle
$|x|\le\log2$, $0\le y\le2\pi$, hence is bounded away from zero
there, and \eqref{eq:offdisk} again follows.

We set $r_{0}\defeq\pi/(2L)=0.7206\cdots$ and, for integers $m\ge0$, we denote
\[
w_{m}=u_{m}+iv_{m},\qquad
r_{m}\defeq\er^{-\eta u_{m}/2},\qquad
D_{m}\defeq\{w:|w-w_{m}|<r_{m}\}.
\]
We fix $m_{0}$ large enough that $r_{m}\le1/L$ (so $r_{m}<r_{0}$)
for all $m\ge m_{0}$. Since consecutive points
$w_{m}$ lie $2\pi/L=4r_0$ apart, the disks $\{D_{m}:m\ge m_{0}\}$
are pairwise disjoint. On the boundary $\partial D_{m}$, we have
$d(z)=L\,r_{m}\le1$, so \eqref{eq:offdisk} yields
\be\label{eq:gdisk}
\bigl|g(z)\bigr|
\ge c\,L\,\er^{-\eta u_{m}/2}\bigl(1+\er^{-x}\bigr)
\qquad(w\in\partial D_{m}).
\ee
On the other hand, the inequality $u\ge u_{m}-r_0>u_{m}-1$
shows that the error term in
\eqref{eq:ZfunRouche} satisfies
\be\label{eq:hdisk}
h(z)\ll\er^{\eta}\,\er^{-\eta u_{m}}\bigl(1+\er^{-x}\bigr)
\qquad(w\in\partial D_{m}).
\ee
The quotient of these two bounds is $\gg\er^{\eta u_{m}/2}$, which tends
to infinity with $m$; hence, enlarging $m_{0}$ if necessary, we
have $|h(z)|<|g(z)|$ at every point of $\partial D_{m}$ for every
$m\ge m_{0}$. The factor $\omflat^{-w}$
never vanishes, and the only zero of $g$ in the closure of $D_{m}$
is the simple zero $w_{m}$, since $r_{m}\le r_{0}$, and any
other zero lies at distance at least $4r_0$ from $w_{m}$.
Consequently, Rouch\'e's theorem, applied on $D_{m}$ to $g$ and $h$,
shows that $\Zfun$ has a unique zero $w'_{m}$ in $D_{m}$,
which is simple, with
\[
|w'_{m}-w_{m}|<r_{m}
=\exp\Bigl(-\frac{\eta\pi^{2}(2m+1)}{4L^{2}}\Bigr).
\]
We set ${\boldsymbol\rho}_{m}\defeq1-w'_{m}$, and \eqref{eq:strings}
follows, with any $\kappa\le\eta\pi^{2}/(2L^{2})$ admissible.

It remains to choose the constant $\tszero$ of the theorem and to
verify that no other zeros occur. The choice is made through finitely
many lower bounds on $\tszero$, each imposed where it arises.
Let $w=u+iv$, with $u\ge\tszero$ and $v\ge0$, lie outside the region
$\bigcup_{m\ge m_{0}}D_{m}$, and set $z\defeq w\sL$ as before.
We show that $|g(z)|>|h(z)|$, and thus $\Zfun(w)\ne0$ by
\eqref{eq:ZfunRouche}.

Since $\Im\,z>0$, every point $i\pi(2k+1)$ with $k\le-1$ lies at distance
greater than $\pi$ from~$z$; equivalently,
\[
|w-w_{k}|>\frac{\pi}{L}=2r_{0}\qquad (k\le-1).
\]
The argument now splits according to whether or not $w$ lies
within $r_{0}$ of some $w_{k}$.

\bigskip\noindent{\sc Case 1}: If $|w-w_{k}|\ge r_{0}$ for every
$k$, then $d(z)\ge L\,r_{0}=\pi/2$, so $\min\{1,d(z)\}=1$, and
\eqref{eq:offdisk} gives $|g(z)|\ge c\bigl(1+\er^{-x}\bigr)$. On the other
hand $h(z)\ll\bigl(1+\er^{-x}\bigr)\er^{-\eta\tszero}$ by
\eqref{eq:ZfunRouche2}. Thus, $|g(z)|>|h(z)|$ when $\tszero$ is large
enough.

\bigskip\noindent{\sc Case 2}: Suppose $|w-w_{m}|<r_{0}$ for some $m$;
note that $m\ge0$ since
$|w-w_{k}|>2r_{0}$ for $k\le-1$. Then $u_{m}\ge u-r_{0}\ge\tszero-1$,
so $m\ge m_{0}$ if $\tszero$ is sufficiently large.
Since $w\notin D_{m}$, we get $r_{m}\le|w-w_{m}|<r_{0}$, whence
$\min\{1,d(z)\}\ge L\,r_{m}$. The bounds \eqref{eq:gdisk} and
\eqref{eq:hdisk} therefore hold at $w$, the first by
\eqref{eq:offdisk} and the second because $u\ge u_{m}-1$, and we
conclude as before.

\bigskip
In both cases $|g(z)|>|h(z)|$, as required. In
combination with Step~1 and conjugation, this shows that every zero
of $\Dfun$ with $\sigma\le-\tszero$ is one of the
${\boldsymbol\rho}_{m}$ with $m\ge m_{0}$, or a conjugate, and that
all such zeros are simple. The argument is a self-contained instance
of the classical zero theory of two-term exponential sums surveyed by
Langer~\cite{Langer}.

\bigskip\noindent{\bf Step 4: Counting.}
We first verify that the strip $-\tszero\le\sigma<0$ contains only
finitely many zeros of $\Dfun$. For fixed $\delta\in(0,1)$, this
holds in the substrip $-\tszero\le\sigma\le-\delta$ by
Proposition~\ref{prop:middle}.
In the substrip $-\delta<\sigma<0$, every zero $\rho$ of $\Dfun$
satisfies $|\zeta(\rho)|=|\Efun(\rho)|\le\Efun(-1)$, the coefficients
of $\Efun$ being positive. On the other
hand, combining \eqref{eq:rfe} with the asymptotic
$|\cX(\sigma+it)|\asymp(|t|/2\pi)^{1/2-\sigma}$ and the classical
bound $1/\zeta(u+it)\ll(\log|t|)^{7}$ for $u\ge1$ and $|t|\ge2$
\cite[\S\S3.11, 4.12]{Titchmarsh}, we obtain
\[
|\zeta(s)|\gg|t|^{1/2}(\log|t|)^{-7}
\qquad(-\delta<\sigma<0,~|t|\ge2).
\]
Hence any zeros of $\Dfun$ in this substrip have bounded $|t|$
and thus are finite in number.

By Step~3, each point $w_{m}$ lies on the ray $\arg w=\Thstar$ at
distance $|w_{m}|=\pi(2m+1)/L$ from the origin, so the number of
$m\ge m_{0}$ with $|w_{m}|\le R$ is $RL/(2\pi)+O(1)$. Since
$|{\boldsymbol\rho}_{m}|=|w_{m}|+O(1)$ and consecutive $|w_{m}|$
differ by $2\pi/L$, the same count holds for the
${\boldsymbol\rho}_{m}$ with $|{\boldsymbol\rho}_{m}|\le R$. The
preceding paragraph and Step~3 together account for every zero of
$\Dfun$ with $\Re\,\rho<0$, so summing over the two conjugate
strings, we get
\[
\bigl|\{\rho:\Dfun(\rho)=0,~\Re\,\rho<0,~|\rho|\le R\}\bigr|=(L/\pi)R+O(1).
\]
This completes the proof.
\end{proof}

\begin{remark}
Conceptually, the left-plane zeros arise from interference between the
two summands of \eqref{eq:split}. The frequency $\omflat=2\log2$
leads the spectrum of $\Efun$, that is, of $\Li_{s}(1/4)$, while
$\pm\omsharp=\pm2\pi i$ lead that of $\zeta$, the $k=1$ column
$\cX(s)\zeta(1-s)$ of Theorem~\ref{thm:fe}. Along the negative real
axis, $\Efun$ dominates because $\omflat<|\omsharp|$, the mechanism
behind Proposition~\ref{prop:real}; as $v$ increases, the factor
$\er^{v\arg\om}$ favors $\zeta$, and the zeros settle on the
interface.
\end{remark}

\begin{table}[ht]
\centering
\setlength{\tabcolsep}{12pt}
\begin{tabular}{|c|l|l|l|}
\hline
\rule{0pt}{2.6ex}%
$m$ & computed zero ${\boldsymbol\rho}_{m}$ & asymptotic \eqref{eq:strings} & error \\[4pt]
\hline
\rule{0pt}{2.6ex}%
1 & $-2.77451-3.02888\,i$ & $-2.11589-2.99776\,i$ & $0.659$\\
2 & $-4.45043-5.15220\,i$ & $-4.19316-4.99627\,i$ & $0.301$\\
3 & $-6.35434-7.08692\,i$ & $-6.27042-6.99477\,i$ & $0.125$\\
4 & $-8.36660-9.04989\,i$ & $-8.34768-8.99328\,i$ & $0.060$\\
5 & $-10.42151-11.02708\,i$ & $-10.42494-10.99179\,i$ & $0.035$\\
6 & $-12.49329-13.01015\,i$ & $-12.50220-12.99030\,i$ & $0.022$\\
7 & $-14.57077-14.99819\,i$ & $-14.57947-14.98880\,i$ & $0.013$\\
8 & $-16.64997-16.99058\,i$ & $-16.65673-16.98731\,i$ & $0.008$\\
9 & $-18.72944-18.98606\,i$ & $-18.73399-18.98582\,i$ & $0.005$\\
10 & $-20.80859-20.98339\,i$ & $-20.81125-20.98432\,i$ & $0.003$\\[3pt]
\hline
\end{tabular}
\vspace{8pt}
\caption{The lower string of left-plane zeros of $\Dfun$ (conjugates omitted). Consecutive computed spacings increase from $2.705$ to $2.883$ and approach the limit $2\pi/L=2.882542\cdots$.}
\label{tab:left}
\end{table}

Table~\ref{tab:left} compares the zeros of $\Dfun$, computed from the
decomposition \eqref{eq:split}, with the leading term of
\eqref{eq:strings}; the error decays geometrically as predicted. The
imaginary parts in the table lie near the odd integers because the
coefficient $\Im(i\pi/\sL)$ in \eqref{eq:strings} equals
$\pi A/L^{2}=0.99925\cdots$, which is close to one.

{\Large\section{Zeros in the critical strip}\label{sec:strip}}

This section completes the description of the zero set of $\Dfun$
by treating the zeros in the critical strip. The key input is the
estimate \eqref{eq:Econst} below, which shows that on each
half-plane $\sigma\ge-\delta$, the entire part $\Efun$ is nearly the
constant $\zeta(2)-1$, so that the strip zeros of $\Dfun$ are
perturbed $a$-points of $\zeta$ for $a\approx-(\zeta(2)-1)$.
Theorem~\ref{thm:rvm} counts these zeros via the argument principle,
establishing a Riemann--von Mangoldt law, and Table~\ref{tab:strip}
pairs each strip zero of ordinate below $75$ with the $a$-point of
$\zeta$ that it shadows.

Throughout, we write
\[
\azero\defeq-(\zeta(2)-1)=1-\frac{\pi^{2}}{6}=-0.644934\cdots.
\]
For $\delta\ge0$, the approximation of $\Efun$ by the constant
$-\azero$ has the quantitative form
\be\label{eq:Econst}
\bigl|\Efun(s)+\azero\bigr|
\le\sum_{n\ge2}\bigl(\zeta(2n)-1\bigr)n^{\delta}
\eqdef r(\delta)
\qquad(\sigma\ge-\delta).
\ee
Numerically, we have
\[
r(\delta)=
\begin{cases}
0.105065\cdots&\text{if $\delta=0$,}\\
0.128496\cdots&\text{if $\delta=\tfrac14$,}\\
0.157663\cdots&\text{if $\delta=\tfrac12$,}\\
0.194149\cdots&\text{if $\delta=\tfrac34$,}\\
0.240032\cdots&\text{if $\delta=1$.}
\end{cases}
\]
Every zero $\rho$ of $\Dfun$ with $\Re\,\rho\ge-\delta$ satisfies
$\zeta(\rho)=-\Efun(\rho)$, an $a$-point of $\zeta$ for a value $a$
lying in the closed disk of radius $r(\delta)$ about $\azero$. The
value-distribution theory of $\zeta$ (Landau; see
\cite[Ch.~7]{Steuding}) then suggests, and the classical arguments
adapt to prove, the following result.

\bigskip

\begin{theorem}\label{thm:rvm}
Let $N_{\Dfun}(T)$ denote the number of zeros of $\Dfun$, counted
with multiplicity, that satisfy $-\tfrac14\le\sigma\le\szero$ and
$0<t\le T$. Then
\[
N_{\Dfun}(T)=\frac{T}{2\pi}\log\frac{T}{2\pi}-\frac{T}{2\pi}+O(\log T).
\]
\end{theorem}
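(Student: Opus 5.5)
We will apply the argument principle to $\Dfun$ on the rectangle $\cR_T$ with vertices $-\tfrac14+i$, $\sigma_1+i$, $\sigma_1+iT$, $-\tfrac14+iT$, where $\sigma_1$ is a fixed abscissa exceeding $\szero$ (say $\sigma_1=2$). For simplicity we take $T$ with $\Dfun\ne0$ on the top edge; general $T$ follow by the usual perturbation, since the number of zeros with $T<t\le T+1$ is $O(\log T)$, as shown below. By Proposition~\ref{prop:right}, no zeros lie in $\szero<\sigma\le\sigma_1$. The zeros with $0<t\le1$ contribute $O(1)$. Hence
\[
N_{\Dfun}(T)=\frac{1}{2\pi}\Delta_{\partial\cR_T}\arg\Dfun(s)+O(1).
\]
We then estimate the argument change edge by edge.

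On the right edge $\sigma=\sigma_1$, Proposition~\ref{prop:right} (indeed $|\Dfun-\zeta(2)|\le h(\sigma_1)<\zeta(2)$) confines $\Dfun$ to a disk not containing $0$. The argument change there is therefore $O(1)$. On the bottom edge it is trivially $O(1)$.

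The main term comes from the left edge $\sigma=-\tfrac14$. We write
\[
\Dfun(s)=\zeta(s)\bigl(1+\Efun(s)/\zeta(s)\bigr)=\cX(s)\zeta(1-s)\bigl(1+\Efun(s)/\zeta(s)\bigr).
\]
By \eqref{eq:Econst}, $|\Efun|\le r(\tfrac14)+|\azero|<1$ there. Meanwhile $|\zeta(-\tfrac14+it)|\gg|t|^{3/4}$ by \eqref{eq:zetabound}, because $\Re(1-s)=\tfrac54>1$. So for $t\ge t_0$ the last factor lies in the disk $|z-1|<\tfrac12$, and its argument changes by $O(1)$. The factor $\zeta(1-s)$ lives on the line $\Re=\tfrac54$, where $\log\zeta$ is bounded. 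Hence its argument change is also $O(1)$.

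Stirling's formula applied to $\cX(-\tfrac14+it)$ gives
\[
\arg\cX(-\tfrac14+it)=-t\log\frac{t}{2\pi}+t+O(1),
\]
the standard computation from the Riemann--von Mangoldt theorem. Traversing the left edge downward therefore contributes $T\log(T/2\pi)-T+O(1)$ to $\Delta\arg$. The segment $1\le t\le t_0$ contributes $O(1)$.

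The main obstacle is the top edge $t=T$, $-\tfrac14\le\sigma\le\sigma_1$, where we need $\Delta\arg\Dfun=O(\log T)$. We follow Backlund's device. Set $F(z)=\tfrac12\{\Dfun(z+iT)+\overline{\Dfun(\bar z+iT)}\}$, whose value at real $z=\sigma$ is $\Re\Dfun(\sigma+iT)$. If $\Re\Dfun$ has $q$ zeros on the segment, the argument change is at most $\pi(q+1)$. We bound $q$ by Jensen's formula for $F$ on a disk centred at $\sigma_1$ of radius slightly larger than $\sigma_1+\tfrac14$. The inputs are these:
\begin{itemize}
\item At the centre, $|F(\sigma_1)|\ge\zeta(2)-h(\sigma_1)>0$.
\item On the disk, $|F|\le|\zeta|+|\Efun|\ll T^{A}$. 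This uses the convexity bound for $\zeta$ and the uniform bound $|\Efun(s)|\le\Efun(-C)$ for $\sigma\ge -C$. The disk stays in $\sigma\ge-C$ and away from the pole once $T$ is large.
\end{itemize}
Jensen's formula then gives $q\ll\log T$.

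The same Jensen argument, applied to $\Dfun$ itself on disks centred at $\sigma_1+iT$, yields the local bound $N_{\Dfun}(T+1)-N_{\Dfun}(T)\ll\log T$ used above.

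Summing the four edges and dividing by $2\pi$ gives the stated formula. Because the Jensen step relies only on the crude polynomial growth of $\Dfun$ and the positive lower bound at $\sigma_1$, we expect it to go through routinely; it is the only place where the argument genuinely uses more than \eqref{eq:Econst} and Proposition~\ref{prop:right}.
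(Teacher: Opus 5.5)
Your proposal is correct and follows essentially the same route as the paper: the argument principle on the rectangle $-\tfrac14\le\sigma\le2$, the factorization $\Dfun=\cX(s)\zeta(1-s)\bigl(1+\Efun/\zeta\bigr)$ with Stirling's formula on the left edge, the positivity margin $\zeta(2)-h(2)$ on the right edge, and Backlund's device with Jensen's bound for $\tfrac12\{\Dfun(z+iT)+\Dfun(z-iT)\}$ on the top edge. The differences are cosmetic. The paper puts the bottom edge at a height $t_{0}$ chosen free of zero ordinates, which also settles the possibility of zeros on your bottom edge $t=1$ and on your left-edge segment $1\le t\le t_{0}$, a point your write-up skips. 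It also passes to arbitrary $T$ using the monotonicity of $N_{\Dfun}$ rather than a separate local Jensen count.
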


\begin{proof}
We prove the asymptotic for those $T$ that are not ordinates of
zeros of $\Dfun$, removing this restriction at the end of the proof;
zeros are counted with multiplicity throughout.

We recall from \eqref{eq:righttri} that
\[
\bigl|\Dfun(s)-\zeta(2)\bigr|\le h(\sigma)
\qquad\text{with}\quad
h(\sigma)\defeq\sum_{n\ge2}\frac{\zeta(2n)}{n^{\sigma}}.
\]
We denote $\mu\defeq\zeta(2)-h(2)$. Since $h$ decreases on $(1,\infty)$ and
$h(\szero)=\zeta(2)$ with $\szero<2$, we see that $\mu$ is positive,
and therefore
\be\label{eq:rvmright}
\Re\,\Dfun(s)\ge\zeta(2)-h(\sigma)\ge\mu>0
\qquad(\sigma\ge2).
\ee

On the line $\sigma=-\tfrac14$, we combine the lower bound \eqref{eq:zetabound}
in the proof of Proposition~\ref{prop:middle}, applied with the choices
$C=\tfrac12$ and $\delta=\tfrac14$, along with the bound
$|\Efun(s)|\le\Efun(-\tfrac14)$, and we obtain
\be\label{eq:rvmleft}
\Dfun(s)=\zeta(s)\bigl(1+\Efun(s)/\zeta(s)\bigr)
\qquad\text{with}\quad
\Efun(s)/\zeta(s)\ll t^{-3/4}
\qquad(\sigma=-\tfrac14,~t\ge2).
\ee
We fix $t_{0}\ge2$ large enough that $|\Efun/\zeta|\le\tfrac12$ on
the ray $\sigma=-\tfrac14$, $t\ge t_{0}$. Increasing $t_{0}$ by
less than one, if necessary, we may assume that no zero of $\Dfun$ has
ordinate $t_{0}$.

Now let $T>t_{0}+8$ be free of ordinates of
zeros of $\Dfun$, and let $N_{\Dfun}^{\square}(T)$
denote the number of zeros of $\Dfun$ in the open rectangle
$\cR\defeq\{s\in\C:-\tfrac14<\sigma<2,~t_{0}<t<T\}$. We claim that
\be\label{eq:rvmreduce}
N_{\Dfun}(T)=N_{\Dfun}^{\square}(T)+O(1).
\ee
Indeed, since no zero has ordinate $T$, the two counts can differ
only through three types of zeros. Zeros with $\sigma\ge \szero$ do
not exist, by Proposition~\ref{prop:right}. Zeros on the line
$\sigma=-\tfrac14$ (which the open rectangle $\cR$ omits) have
$t\le t_{0}$ by \eqref{eq:rvmleft}. Zeros with $0<t\le t_{0}$ are
$O(1)$ in number, since the zeros of the meromorphic function
$\Dfun\not\equiv0$ do not accumulate.

\begin{figure}[ht]
\centering
\includegraphics[width=3.5truein]{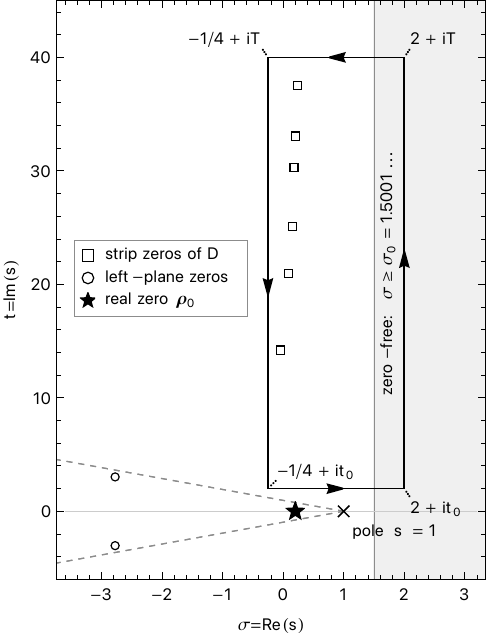}
\caption{The counting rectangle in the proof of
Theorem~\ref{thm:rvm}, drawn for $t_{0}=2$ and $T=40$. The squares
mark the strip zeros of $\Dfun$ with $t<40$, listed in
Table~\ref{tab:strip}. The cross marks the pole at $s=1$ and the
star marks the real zero $\rzero=0.2004\cdots$, both below the
bottom edge. The circles mark the conjugate pair
${\boldsymbol\rho}_{1}$, $\overline{{\boldsymbol\rho}_{1}}$ of
left-plane zeros, outside the rectangle. The dashed lines are the
rays $\arg(1-s)=\pm\Thstar$ of Theorem~\ref{thm:strings}. The shaded
half-plane $\sigma\ge\szero$ is zero-free.}
\label{fig:rvm}
\end{figure}

We now verify that the argument principle applies on $\cR$. The only
pole $s=1$ of $\Dfun$ lies below the bottom edge, so $\Dfun$ is
analytic on the closure of $\cR$. Moreover, no zero of $\Dfun$ lies
on the boundary $\partial\cR$ (see Figure~\ref{fig:rvm}); on the
right edge this holds by \eqref{eq:rvmright}, on the left and bottom
edges by \eqref{eq:rvmleft} and the choice of $t_{0}$, and on the
top edge by the choice of~$T$. Consequently,
\[
2\pi N_{\Dfun}^{\square}(T)=\Delta_{\partial}\arg\Dfun(s),
\]
where $\Delta_{\partial}$ denotes the increment of a continuous
branch of the argument along the positively oriented boundary. We
estimate the four edges separately.

\bigskip\noindent{\bf Step 1: Bottom and right.}
The bottom edge does not depend on $T$ and contributes $O(1)$. On
the right edge, the values of $\Dfun$ lie in the half-plane
$\Re\,z\ge\mu$ by \eqref{eq:rvmright}, so a continuous branch of
the argument remains in $(-\pi/2,\pi/2)$, and the increment is
$O(1)$.

\bigskip\noindent{\bf Step 2: Top.}
Let $n(T)$ denote the number of zeros of
the function $\sigma\mapsto\Re\,\Dfun(\sigma+iT)$ on
$[-\tfrac14,2]$. These zeros split the interval into at most
$n(T)+1$ subintervals, on each of which $\Re\,\Dfun(\sigma+iT)$
has constant sign. On each subinterval, the nonzero values of
$\Dfun(\sigma+iT)$ therefore lie in a closed half-plane, so the
increment of the argument across the top edge is at most
$\pi(n(T)+1)$ in absolute value.

To bound $n(T)$, we recall a standard consequence of Jensen's
formula (see \cite[\S9.4]{Titchmarsh}). If $f$ is analytic on the
disk $|z-z_{0}|\le R$ and $f(z_{0})\ne0$, then, for every
$r\in(0,R)$, the number of zeros of $f$ on the disk
$|z-z_{0}|\le r$, counted with multiplicity, does not exceed
\[
\frac{1}{\log(R/r)}\,
\log\frac{\max_{|z-z_{0}|\le R}|f(z)|}{|f(z_{0})|}.
\]
We set
\[
f(z)\defeq\tfrac12\bigl\{\Dfun(z+iT)+\Dfun(z-iT)\bigr\}.
\]
The poles of the two summands lie at $z=1\mp iT$, at distance
greater than four from the point $z=2$ (since $T>t_{0}+8$), so $f$ is analytic
on the disk $|z-2|\le 4$. The reflection identity
$\overline{\Dfun(\overline s)}=\Dfun(s)$, established in Step~1 of
the proof of Theorem~\ref{thm:strings}, shows that
\[
f(\sigma)=\Re\,\Dfun(\sigma+iT)
\]
for real $\sigma$; thus every zero counted by $n(T)$ is a zero of
$f$ on the segment $[-\tfrac14,2]$, which the disk
$|z-2|\le\tfrac94$ contains.
On the larger disk $|z-2|\le4$, we have
\[
-2\le\Re(z\pm iT)\le6
\mand
|\Im(z\pm iT)|\ge T-4,
\]
so the crude estimate $\zeta(w)\ll|\Im\,w|^{3}$, valid for $-2\le\Re\,w\le6$ and
$|\Im\,w|\ge2$ (see, for example, \cite[\S5.1]{Titchmarsh}),
together with $|\Efun(w)|\le\Efun(-2)$, yields
$f(z)\ll T^{3}$ on the disk. Finally, we have
$f(2)=\Re\,\Dfun(2+iT)\ge\mu$ by \eqref{eq:rvmright}. Applying the
above counting bound with $z_{0}=2$, $r=\tfrac94$, and $R=4$, we conclude that
\[
n(T)\le\frac{1}{\log(16/9)}\,
\log\frac{\max_{|z-2|\le4}|f(z)|}{\mu}\ll\log T,
\]
and so the top edge contributes $O(\log T)$.

\bigskip\noindent{\bf Step 3: Left.}
Now we treat the left edge, which is traversed downward from
$-\tfrac14+iT$ to $-\tfrac14+it_{0}$. Here and below, $\Delta$
denotes the increment as $t$ increases from $t_{0}$ to $T$, so the
edge contributes $-\Delta\arg\Dfun(-\tfrac14+it)$. By
\eqref{eq:rvmleft} and the choice of $t_{0}$, the factor
$1+\Efun/\zeta$ has real part at least $\tfrac12$ on the edge, so
its argument increments by $O(1)$ on the segment, hence \eqref{eq:rfe} implies
\[
\Delta\arg\Dfun(-\tfrac14+it)
=\Delta\arg\cX(-\tfrac14+it)
+\Delta\arg\zeta(\tfrac54-it)+O(1).
\]
For $\Re\,w=\tfrac54$, the series
\[
\log\zeta(w)=\sum_{p}\sum_{k\ge1}k^{-1}p^{-kw}
\]
converges absolutely, so $\arg\zeta(\tfrac54-it)$ admits a continuous
branch bounded by $\log\zeta(\tfrac54)$ in absolute value, whence
$\Delta\arg\zeta(\tfrac54-it)=O(1)$. For $\cX$,
the uniform asymptotic in a bounded strip
(see \cite[Eq.~(4.12.3)]{Titchmarsh}) gives
\[
\cX(\sigma+it)
=\Bigl(\frac{2\pi}{t}\Bigr)^{\sigma+it-1/2}\er^{i(t+\pi/4)}
\bigl(1+O(t^{-1})\bigr)
\qquad(-1\le\sigma\le2,~t\ge2).
\]
Taking arguments along a continuous branch at $\sigma=-\tfrac14$,
we find that
\[
\arg\cX(-\tfrac14+it)=-t\log\frac{t}{2\pi}+t+O(1)
\qquad(t\ge2).
\]
Since the branch value at $t=t_{0}$ is $O(1)$, the increment
satisfies
\be\label{eq:rvmstirling}
\Delta\arg\cX(-\tfrac14+it)=-T\log\frac{T}{2\pi}+T+O(1).
\ee
The left edge therefore contributes
\[
-\Delta\arg\Dfun(-\tfrac14+it)
=T\log\frac{T}{2\pi}-T+O(1).
\]

\bigskip

Combining the four edges with \eqref{eq:rvmreduce}, we conclude that
\[
N_{\Dfun}(T)
=\frac{T}{2\pi}\log\frac{T}{2\pi}-\frac{T}{2\pi}+O(\log T)
\]
whenever $T$ is not an ordinate of a zero. For arbitrary $T$, we
choose $T'\in(T,T+1)$ and $T''\in(T-1,T)$ that are not ordinates of
zeros. Since $N_{\Dfun}$ is nondecreasing, we have
$N_{\Dfun}(T'')\le N_{\Dfun}(T)\le N_{\Dfun}(T')$, and since the
main term varies by $O(\log T)$ on $[T-1,T+1]$, the asymptotic
holds for all $T$.
\end{proof}

\begin{remark}[Clustering near the critical line]
Levinson \cite{Levinson} proved that for fixed $a$ and every
$\varepsilon>0$, all but $o(N_{a}(T))$ of the $a$-points of $\zeta$ with
ordinate in $(0,T]$ lie in $|\sigma-\tfrac12|<\varepsilon$, where
$N_{a}(T)$ denotes their total number; the proof derives from
mean-value estimates for~$\zeta$ that are insensitive to replacing
the constant $a$ by the Dirichlet series $-\Efun(s)$, which is bounded,
nearly constant, and everywhere absolutely convergent.
Whether almost all strip zeros of $\Dfun$
cluster arbitrarily close to $\sigma=\tfrac12$ is left open here
(see Section~\ref{sec:concl}); the data below are consistent with
such clustering.
\end{remark}

Table~\ref{tab:strip} lists all strip zeros with $0<t<75$; there are
eighteen, against the prediction $18.54$ implied by
Theorem~\ref{thm:rvm} (with the customary $+7/8$).
Each zero lies within $0.08$ of the nearest
$a$-point of $\zeta$ for the fixed value $a=\azero$. The second
column verifies that $\zeta(\rho)$ lies within
$r(\tfrac14)=0.128\cdots$ of $\azero$, as \eqref{eq:Econst} demands.

\begin{table}[p]
\centering
\setlength{\tabcolsep}{6pt}
\begin{tabular}{|l|l|l|c|}
\hline
\multicolumn{1}{|c|}{\rule{0pt}{2.6ex}zero $\rho$ of $\Dfun$} & \multicolumn{1}{c|}{$\zeta(\rho)=-\Efun(\rho)$} & \multicolumn{1}{c|}{nearest $a$-point of $\zeta$} & distance \\[4pt]
\hline
\rule{0pt}{2.6ex}%
$-0.045481+14.202578\,i$ & $-0.5516-0.0302\,i$ & $-0.111989+14.239054\,i$ & $0.076$\\
$\phantom{-}0.091297+20.974154\,i$ & $-0.6041+0.0555\,i$ & $\phantom{-}0.075137+20.941496\,i$ & $0.036$\\
$\phantom{-}0.149094+25.089780\,i$ & $-0.6383-0.0644\,i$ & $\phantom{-}0.154382+25.117849\,i$ & $0.029$\\
$\phantom{-}0.179120+30.296803\,i$ & $-0.5996+0.0707\,i$ & $\phantom{-}0.173950+30.260653\,i$ & $0.037$\\
$\phantom{-}0.201071+33.068546\,i$ & $-0.6032-0.0678\,i$ & $\phantom{-}0.198225+33.101060\,i$ & $0.033$\\
$\phantom{-}0.233131+37.533769\,i$ & $-0.6758+0.0513\,i$ & $\phantom{-}0.246615+37.520026\,i$ & $0.019$\\
$\phantom{-}0.211171+40.868558\,i$ & $-0.5845+0.0074\,i$ & $\phantom{-}0.189184+40.862196\,i$ & $0.023$\\
$\phantom{-}0.258957+43.450622\,i$ & $-0.6506-0.0746\,i$ & $\phantom{-}0.272436+43.470819\,i$ & $0.024$\\
$\phantom{-}0.266535+47.832365\,i$ & $-0.6223+0.0768\,i$ & $\phantom{-}0.276408+47.806172\,i$ & $0.028$\\
$\phantom{-}0.231617+49.888034\,i$ & $-0.5756-0.0152\,i$ & $\phantom{-}0.209942+49.902407\,i$ & $0.026$\\
$\phantom{-}0.282691+53.008092\,i$ & $-0.6810-0.0455\,i$ & $\phantom{-}0.294379+53.017759\,i$ & $0.015$\\
$\phantom{-}0.287218+56.394482\,i$ & $-0.6621+0.0579\,i$ & $\phantom{-}0.295804+56.381027\,i$ & $0.016$\\
$\phantom{-}0.235011+59.236432\,i$ & $-0.5714-0.0030\,i$ & $\phantom{-}0.210079+59.228839\,i$ & $0.026$\\
$\phantom{-}0.306058+61.024145\,i$ & $-0.6284-0.0768\,i$ & $\phantom{-}0.319352+61.045757\,i$ & $0.025$\\
$\phantom{-}0.312599+64.990304\,i$ & $-0.6671+0.0691\,i$ & $\phantom{-}0.327629+64.978363\,i$ & $0.019$\\
$\phantom{-}0.258000+67.103361\,i$ & $-0.5885+0.0245\,i$ & $\phantom{-}0.240368+67.096845\,i$ & $0.019$\\
$\phantom{-}0.286127+69.580847\,i$ & $-0.6197-0.0479\,i$ & $\phantom{-}0.282000+69.594860\,i$ & $0.015$\\
$\phantom{-}0.320154+72.118275\,i$ & $-0.7004-0.0271\,i$ & $\phantom{-}0.333428+72.120615\,i$ & $0.013$\\[5pt]
\hline
\end{tabular}
\vspace{8pt}
\caption{Strip zeros of $\Dfun$ with \mbox{$0<t<75$} and the $a$-points of $\zeta$ that they shadow.}
\label{tab:strip}
\end{table}

{\Large\section{Computational details}\label{sec:comp}}

Every numerical claim in this paper, including the tables, the
plotted coordinates of the figures, and each standalone constant,
derives from one of three instrumented Mathematica runs. All three
were executed with Mathematica 13.0.0 on an eight-core ARM64
processor under macOS; each prints its working precision, truncation
points, stage wall-clock times, and certified counts, and the
printed outputs are retained together with the notebooks. The
default working precision is 40 significant digits, raised to 60 for
the string coordinates in the first run.

We evaluate $\Dfun$ through the decomposition \eqref{eq:split},
truncating the series for $\Efun$ at $n_{E}$ terms and
differentiating termwise where $\Dfun'$ is required. Since
$\zeta(2n)-1<4^{1-n}$, the omitted tail is negligible at working
precision provided $n_{E}$ is chosen against the leftmost abscissa
of the run; for instance, $n_{E}=220$ keeps the tail below
$10^{-62}$ on $\sigma\ge-30$. Completeness of the tables is
certified by counts obtained from the argument principle, with
$\Dfun'/\Dfun$ integrated numerically over the boundary of the
stated rectangle and the result rounded to the nearest integer,
matched in every case against an independent root search.

The first run consolidates the verification of the standalone
constants. It evaluates, to at least thirty printed digits, the
constants $A$, $L$, $\Thstar$, $2\pi/L$, and $\pi A/L^{2}$ of
Section~\ref{sec:left}, the exponent $\eta$ of
Lemma~\ref{lem:tail} together with the sector exponent of
Remark~\ref{rem:eta}, the admissible $\kappa$ of \eqref{eq:strings},
the abscissa $\szero$, the real zero $\rzero$ with the sign scan on
$(0,1)$, the table of $r(\delta)$ in \eqref{eq:Econst}, the margin
$\mu$ from the proof of Theorem~\ref{thm:rvm}, and the crossing
value $\alpha^{*}$ of Section~\ref{sec:concl}. It verifies the
numerical inputs $\er^{\pi}=23.140\cdots<31$ of
Lemma~\ref{lem:tail} and $0.177\cdots<1$ of
Proposition~\ref{prop:real}, checks two closed-form identities to
$10^{-39}$, and confirms the evaluations $\Efun(0)=\tfrac34$,
$\Efun(1)=\log2$, and $\Dfun(0)=\tfrac14$ with residuals below
$10^{-144}$ at truncation $240$.
Its final stage produces the string coordinates for $m\le25$ at 60
digits, from which every plotted point in the figures is drawn. The
full run completes in under twenty seconds.

The second run certifies Table~\ref{tab:left}. The
argument-principle count on the rectangle $[-22,-1]\times[-22,-1]$
equals ten, in agreement with the ten zeros subsequently located by
Newton iteration seeded at the leading term of \eqref{eq:strings};
the residuals $|\Dfun({\boldsymbol\rho}_{m})|$ vanish to the
attainable accuracy, which decreases from $10^{-38}$ at $m=1$ to
$10^{-21}$ at $m=10$ through cancellation in the series for
$\Efun$, and every published entry of the table is reproduced
within its final printed digit. Truncation $n_{E}=220$ suffices on
this rectangle. The certification consumes 48.5\,s of the 49.4\,s
total.

The third run certifies Table~\ref{tab:strip}. On the rectangle
$[-\tfrac14,2]\times[\tfrac12,75]$, whose right edge lies in the
half-plane of \eqref{eq:rvmright} and whose interior contains
neither the pole nor the real zero, the argument-principle count for
$\Dfun$ equals eighteen (393.2\,s), as does the count of $a$-points
of $\zeta$ for $a=\azero$ (327.6\,s).
A further count on $[-\tfrac14,2]\times[-\tfrac12,\tfrac12]$
returns zero (4.1\,s), the simple zero $\rzero$ canceling the
simple pole at $s=1$, so the band $0<t\le\tfrac12$ contains no
additional zeros.
A sweep locates the eighteen $a$-points with $t<75$, and twenty-one
with $t<80$, guarding the top edge, while a root search returns the
eighteen zeros, with residuals $|\Dfun(\rho)|$ and
$|\zeta(\rho)+\Efun(\rho)|$ below $10^{-34}$. Each zero lies within
$0.076$ of its nearest $a$-point, while the second nearest lies
beyond $1.79$, so the pairings of the table are unambiguous; every
published entry is reproduced within its final printed digit, and
the least ordinate gap to the edges of the rectangle exceeds
$0.54$. Truncation $n_{E}=130$ suffices for $\sigma\ge-\tfrac14$.
The run completes in 727.6\,s.

As an independent check, the zeros in Tables~\ref{tab:left}
and~\ref{tab:strip} and the constants $\rzero$ and $\szero$ were
recomputed with the arbitrary-precision library
mpmath~\cite{mpmath}, in full agreement at the stated precision.

\medskip

The notebooks and complete printed outputs are available from
the author on request.

{\Large\section{Concluding remarks}\label{sec:concl}}

Three questions seem natural. First, the dual series
$\Zfun(w)=\sum_{\om\in\Om}\om^{-w}$ of \eqref{eq:reduction} is a
Dirichlet series over the logarithms of the perfect squares; one can ask
whether there is a completed version of $\Dfun$ enjoying a genuine
symmetry, or whether the asymmetry between the harmonics $\{\log n\}$
and the frequencies $\{2\log k+2\pi i\Z\}$ is irreparable.

Second, a
full proof of Levinson-type clustering for the strip zeros of $\Dfun$,
and more finely their horizontal distribution relative to the
$a$-points they shadow, would complete the strip picture; the general
framework for zeros of $\zeta(s)-g(s)$ with $g$ a bounded Dirichlet
series has not, so far as we are aware, been worked out in the
literature.

Third, the same interference mechanism governs the wider family
\[
\Dfun_{\alpha}(s)\defeq\sum_{n\in\N}\zeta(\alpha n)n^{-s},
\]
defined for real $\alpha>1$, with frequencies $\alpha\log k+2\pi
i\ell$. For $\alpha<2\pi/\log2=9.0647\cdots$, the smallest
frequencies are $\alpha\log2$ and $\pm2\pi i$, and the mechanism
predicts left-plane strings along the rays of angle
$\arctan\bigl(2\log(2\pi/(\alpha\log2))/\pi\bigr)$; past the
threshold, the smallest pair is $\pm2\pi i$, and the string geometry
changes. Weighted variants change it differently; for
$\sum_{n\in\N}(\zeta(2n)-1)n^{-s}$, the $k=1$ column is absent, the
smallest frequencies are the real pair $2\log2$ and $2\log3$, and
the two-term prediction yields vertical strings in place of oblique
rays.

In the limit $\alpha\to\infty$, we have $\Dfun_{\alpha}\to\zeta$
locally uniformly; indeed $\Dfun_{\alpha}-\zeta\ll2^{-\alpha}$ on
compact sets, so the zeros of $\Dfun_{\alpha}$ migrate to those of
$\zeta$, the displacement at a simple zero $\rho$ being of size
$2^{-\alpha}/|\zeta'(\rho)|$. The
strip zeros also migrate as $a$-points, the target value
$a=1-\zeta(\alpha)$ tending to zero; the real zero leaves $(0,1)$
and settles beside the trivial zero at $-2$ once
\[
\Dfun_{\alpha}(0)=-\tfrac12+\sum_{k\ge2}\frac{1}{k^{\alpha}-1}
\]
changes sign, near $\alpha=2.3$. The strings flatten as $\alpha$
increases to $2\pi/\log2$,
past which the leading pair $\pm2\pi i$ regenerates the interference
behind the trivial zeros of $\zeta$ itself, and the string zeros
appear to be absorbed, through collisions of conjugate pairs on the
real axis, as the real zeros that settle beside the remaining
trivial zeros. A systematic treatment may be worthwhile.

\section*{Acknowledgments}
The author thanks Qiping Zhou for the proof of Proposition~\ref{prop:mono}.

Part of this work was carried out at the Max Planck Institute for
Mathematics in Bonn, whose hospitality and support the author
gratefully acknowledges.

The author used Claude (Anthropic) as a writing and verification
assistant in the preparation of this paper. Its contributions
comprised the drafting of Section~\ref{sec:comp} from the author's
certified computational output, revision of the prose for clarity
and consistency, and verification of cross-references, attributions,
and numerical constants against that output. All mathematical
results, proofs, and computations are the author's own, except as
attributed above, and the author reviewed and verified every part of
the text so prepared.


\begin{thebibliography}{99}

\bibitem{AL}
J.~A. Adell and A.~Lekuona,
Dirichlet's eta and beta functions: concavity and fast computation of
their derivatives.
\emph{J. Number Theory} 157 (2015), 215--222.

\bibitem{BBC}
J.~M. Borwein, D.~M. Bradley, and R.~E. Crandall,
Computational strategies for the Riemann zeta function.
\emph{J. Comput. Appl. Math.} 121 (2000), 247--296.

\bibitem{DH}
H.~Davenport and H.~Heilbronn,
On the zeros of certain Dirichlet series.
\emph{J. London Math. Soc.} 11 (1936), 181--185;
II, \emph{ibid.}, 307--312.

\bibitem{HTF}
A.~Erd\'elyi, W.~Magnus, F.~Oberhettinger, and F.~G. Tricomi,
\emph{Higher Transcendental Functions}, Vol.~I.
McGraw--Hill, New York, 1953.

\bibitem{mpmath}
F.~Johansson et al.,
\emph{mpmath: a Python library for arbitrary-precision floating-point arithmetic}, version 1.3.0, 2023.
\url{https://mpmath.org}

\bibitem{KR}
M.~Knopp and S.~Robins,
Easy proofs of Riemann's functional equation for $\zeta(s)$ and of Lipschitz summation.
\emph{Proc. Amer. Math. Soc.} 129 (2001), 1915--1922.

\bibitem{Langer}
R.~E. Langer,
On the zeros of exponential sums and integrals.
\emph{Bull. Amer. Math. Soc.} 37 (1931), 213--239.

\bibitem{Levinson}
N.~Levinson,
Almost all roots of $\zeta(s)=a$ are arbitrarily close to $\sigma=1/2$.
\emph{Proc. Nat. Acad. Sci. U.S.A.} 72 (1975), 1322--1324.

\bibitem{Lipschitz}
R.~Lipschitz,
Untersuchung der Eigenschaften einer Gattung von unendlichen Reihen.
\emph{J. Reine Angew. Math.} 105 (1889), 127--156.

\bibitem{vdL}
J.~van de Lune,
Some inequalities involving Riemann's zeta-function.
Report ZW 50/75, Mathematisch Centrum, Afdeling Zuivere Wiskunde,
Amsterdam, 1975.

\bibitem{DLMF}
F.~W.~J. Olver et al. (eds.),
\emph{NIST Digital Library of Mathematical Functions}.
\url{https://dlmf.nist.gov/}

\bibitem{SC}
H.~M. Srivastava and J.~Choi,
\emph{Series Associated with the Zeta and Related Functions}.
Kluwer, Dordrecht, 2001.

\bibitem{Steuding}
J.~Steuding,
\emph{Value-Distribution of $L$-Functions}.
Lecture Notes in Mathematics 1877, Springer, Berlin, 2007.

\bibitem{Titchmarsh}
E.~C. Titchmarsh,
\emph{The Theory of the Riemann Zeta-Function}, 2nd ed., revised by D.~R. Heath-Brown.
Oxford University Press, Oxford, 1986.

\end{thebibliography}
\end{document}